\theoremstyle{plain}
\newtheorem{theorem}{Theorem}[section]
\newtheorem{proposition}[theorem]{Proposition}
\newtheorem{lemma}[theorem]{Lemma}
\newtheorem*{question}{Question}
\newtheorem{conjecture}[theorem]{Conjecture}
\theoremstyle{definition}
\newtheorem{definition}[theorem]{Definition}
\newcommand{\C}{\mathbb{C}}
\newcommand{\Z}{\mathbb{Z}}
\renewcommand{\P}{\mathbb{P}}
\renewcommand{\L}{\ensuremath{\mathcal{L}}}
\newcommand{\M}{\ensuremath{\mathcal{M}}}
\newcommand{\N}{\ensuremath{\mathcal{N}}}
\renewcommand{\O}{\ensuremath{\mathcal{O}}}
\newcommand{\la}{\langle}
\newcommand{\ra}{\rangle}
\title{Exceptional collections on some fake quadrics}
\author{Kyoung-Seog Lee}
\address{School of Mathematics, Korea Institute for Advanced Study, Seoul 130-722, Republic of Korea}
\email{kyoungseog02@gmail.com}
\author{Timofey Shabalin}
\address{National Research University Higher School of Economics, AG Laboratory, 7 Vavilova str., Moscow, Russia, 117312} 
\email{shabalin.timofey@gmail.com}
\thanks{T. S. was partially supported by AG Laboratory HSE, RF government grant, ag. 11.G34.31.0023 and RScF grant, ag. 14-21-00053}
\keywords{Derived category, exceptional sequence, quasiphantom category, surfaces of general type, surfaces isogenous to a higher product}
\begin{document}
\xyoption{arc}
\maketitle

\begin{abstract}
We construct exceptional collections of maximal length on four families of surfaces of general type with $p_g=q=0$ which are isogenous to a product of curves. From these constructions we obtain new examples of quasiphantom categories as their orthogonal complements.
\end{abstract}

\section{Introduction}

Derived categories of coherent sheaves are one of the most attractive and mysterious invariants of algebraic varieties and the notion of semiorthogonal decomposition plays a key role in the study of derived categories of algebraic varieties. Semiorthogonal decompositions tell us the structure of derived categories and many interesting semiorthogonal decompositions of Fano and rational varieties were constructed. However, in contrast to the many studies of derived categories of Fano or rational varieties, we do not know much about the structure of derived categories of varieties of general type. \\
 
One of the easiest ways to construct a semiorthogonal decomposition is to find an exceptional sequence. When a triangulated category has an exceptional sequence we can divide it into the category generated by exceptional sequence and its orthogonal complement. For a surface with $p_g=q=0$, every line bundle is an exceptional object and we can construct semiorthogonal decompositions using line bundles. Then we can hope that for some surfaces with $p_g=q=0$ there are exceptional sequences of maximal lengths and we can study derived categories of these surfaces using semiorthogonal decompositions induced from them. B\"{o}hning, Graf von Bothmer and Sosna proved that there exists exceptional sequence of maximal length on the classical Godeaux surface in \cite{BBS}. They constructed the first example of a quasiphantom category as the orthogonal complement of this exceptional collection. Motivated by their results now there are lots of studies on derived categories of surfaces of general type with $p_g=q=0$. See the papers of B\"{o}hning, Graf von Bothmer, and Sosna \cite{BBS}, Alexeev and Orlov \cite{AO}, Galkin and Shinder \cite{GS}, B\"{o}hning, Graf von Bothmer, Katzarkov and Sosna \cite{BBKS}, Fakhruddin \cite{Fakhruddin}, Galkin, Katzarkov, Mellit and Shinder \cite{GKMS}, Coughlan \cite{Coughlan}, Keum \cite{Keum} and the first author \cite{lee1, lee2} for more details. They constructed categories with vanishing Hochschild homologies as orthogonal complements of exceptional sequences of line bundles of maximal lengths. Some of them are known to have finite Grothendieck groups and they provide examples of quasiphantom categories. Supported by these examples, it seems that the following question is now considered by many experts.

\begin{question}
Let $S$ be a smooth projective surface of general type with $p_g=q=0$. Is there an exceptional sequence whose length is equal to the rank of Grothendieck group of $S$ or the total dimension of $H^*(S,\mathbb{C})$? Especially can we construct such an exceptional sequence using line bundles on $S$?
\end{question}
 
We want to answer the above question for some special surfaces of general type with $p_g=q=0$. Bauer, Catanese and Grunewald have classified surfaces of general type with $p_g=q=0$ which are quotients of a product of curves by the free diagonal action of a finite group in \cite{bcg}. There are 12 families of such surfaces and these are called the surfaces isogenous to a higher product of unmixed type. The rank of Grothendieck group of every such surface is 4 and the total dimension of cohomology group of every such surface is also 4 \cite{GS}. Therefore the maximal possible length of the exceptional sequence on every such surface is 4. For the 4 families of such surfaces with abelian group quotients, exceptional collections of maximal length were constructed in \cite{GS, lee1, lee2}. In this paper we construct such collections in four more cases where $G$ is $D_4 \times \Z/2$, $S_4$, $S_4 \times \Z/2$ and $(\Z/4 \times \Z/2) \rtimes \Z/2$ ($G(16)$ in the notation of \cite{bcg}).

\begin{theorem}
Let $S=(C \times D)/G$ be a surface isogenous to a higher product with $p_g=q=0$, where $G$ is one of $D_4 \times \Z/2$, $S_4$, $S_4 \times \Z/2$ and $(\Z/4 \times \Z/2) \rtimes \Z/2$. Then there are exceptional sequences of line bundles of maximal length 4 on $S$ and the orthogonal complements of these exceptional sequences are quasiphantom categories.
\end{theorem}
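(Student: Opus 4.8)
The plan is to construct, for each of the four groups $G$, an explicit exceptional sequence of four line bundles on $S=(C\times D)/G$ and verify that its orthogonal complement is a quasiphantom category. The setup is that $G$ acts freely and diagonally on $C\times D$, so $S$ is smooth and line bundles on $S$ correspond to $G$-equivariant line bundles on $C\times D$, which in turn can be described via characters of $G$ together with the $G$-linearized structure coming from the two curves. Since $p_g=q=0$, the numerical Grothendieck group has rank $4$ and $H^*(S,\C)$ has total dimension $4$, so a length-$4$ exceptional collection is automatically of maximal length, and its orthogonal complement $\mathcal A$ has vanishing Hochschild homology (it is a phantom-type category); the quasiphantom property will then follow from finiteness of the Grothendieck group, which is controlled by the finite abelianization $G^{\ab}$.

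The key computational input is the cohomology of line bundles on $S$. The plan is to pull everything back to $C\times D$ and use the K\"unneth formula together with the decomposition of $H^*(C,\L)$ and $H^*(D,\M)$ into irreducible (or character) components under the respective actions of $G$. Concretely, for each pair of line bundles $L_i,L_j$ in the candidate sequence I would compute
\begin{equation}
\mathrm{Ext}^k_S(L_i,L_j)=H^k\bigl(S, L_j\otimes L_i^{-1}\bigr)=\bigl(H^*(C\times D, \pi^*(L_j\otimes L_i^{-1}))\bigr)^G,
\end{equation}
taking the $G$-invariant part of the K\"unneth product. To make this effective I need, for each $G$, the character of $H^0$ and $H^1$ of the relevant equivariant line bundles on $C$ and on $D$; these follow from the genus data and the ramification (branch) structure of the covers $C\to C/G$ and $D\to D/G$ recorded in \cite{bcg}, via the Chevalley--Weil formula or an equivariant Riemann--Roch computation. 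The goal is to exhibit four line bundles $L_1,\dots,L_4$ for which $\mathrm{Ext}^k_S(L_i,L_j)=0$ for all $k$ whenever $i>j$ (the exceptionality/semiorthogonality conditions), while each $L_i$ is exceptional automatically since $p_g=q=0$.

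**The main obstacle** I expect is the representation-theoretic bookkeeping for the nonabelian groups: unlike the already-settled abelian cases in \cite{GS,lee1,lee2}, for $D_4\times\Z/2$, $S_4$, $S_4\times\Z/2$ and $G(16)$ the equivariant cohomology groups decompose into higher-dimensional irreducibles, so finding a quadruple of characters (or more general line bundles) whose pairwise differences have the correct invariant cohomology is a genuine search rather than a direct diagonal argument. The most delicate part is ensuring the \emph{simultaneous} vanishing of all $\mathrm{Ext}^k$ in the semiorthogonal direction; since the possible line bundles on $S$ are parametrized by $\mathrm{Pic}(S)$ with torsion part $G^{\ab}$, and $G^{\ab}$ is small (of order $4$ or $2$) for these groups, the pool of candidate twists is limited, which makes the existence of a working collection nontrivial and is precisely where the explicit classification from \cite{bcg} must be used.

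Once such a collection is found, verifying that the orthogonal complement $\mathcal A$ is a quasiphantom is comparatively formal: vanishing of Hochschild homology follows from the additivity of $HH_*$ over semiorthogonal decompositions together with $HH_*(D^b(S))$ being concentrated so as to be exhausted by the four exceptional line bundles (equivalently, $\chi$ and the Hodge numbers match), while $K_0(\mathcal A)$ is a finite group isomorphic to the torsion $\mathrm{Pic}(S)_{\mathrm{tors}}\cong G^{\ab}$, which is nonzero, so $\mathcal A$ is not a genuine phantom. **Thus the heart of the argument** is the cohomology vanishing computation, and I would organize the proof as four parallel case analyses, one per group, each reducing to a finite character computation that I would tabulate.
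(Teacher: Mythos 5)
Your overall architecture (a length-$4$ collection of line bundles, exceptionality of each $L_i$ from $p_g=q=0$, semiorthogonality via K\"unneth on $C\times D$ plus $G$-invariants, and the formal quasiphantom argument via additivity of Hochschild homology and $K_0(\mathcal A)\cong\mathrm{Pic}(S)_{\mathrm{tors}}$) matches the paper's. But there are two genuine gaps in the way you propose to carry out the construction and the vanishing.

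First, your computational scheme presumes that the candidate line bundles on $S$ are external products of $G$-\emph{equivariant} line bundles on the two curves, whose cohomology you would then decompose by Chevalley--Weil. For $G=D_4\times\Z/2$, $S_4$ and $S_4\times\Z/2$ this search comes up empty: no numerical half of $K_{C_1}$ admits a $G$-equivariant structure at all, because of a nontrivial obstruction class in $H^2(G,\C^\times)$ (the last map of the sequence $0\to\widehat G\to \mathrm{Pic}^G\to \mathrm{Pic}^G_{\mathrm{inv}}\to H^2(G,\C^\times)$). The bundles of numerical type $\O(1,j)$ on $S$ that you need do exist, but they arise as $\L\boxtimes\M$ where $\L$ and $\M$ are $G$-invariant but \emph{not} equivariant on their respective curves, with $\eta_\L\cdot\eta_\M=0$ so that the product is equivariant; the existence of such an $\M$ rests on Dolgachev's theorem that $\mathrm{Pic}(X)^G\to H^2(G,\C^\times)$ is surjective for a curve. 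Without this idea your ``finite character computation'' has no objects to tabulate, and twisting by the small group $G^{\ab}$ of characters cannot repair it.

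Second, the decisive vanishing statements cannot be obtained from equivariant Riemann--Roch or Chevalley--Weil as you suggest, because the bundles in question are theta characteristics: their Euler characteristic is $0$ as a virtual representation, which is consistent with $h^0=h^1=2$ just as well as with $h^0=h^1=0$. Proving $h^0=0$ is the actual content of each case and requires geometric input: the paper repeatedly applies Beauville's lemma on $\sigma$-invariant theta characteristics (splitting $\pi_*\L$ through towers of intermediate quotient curves), and in the $S_4$ case even this is not enough pointwise --- one needs Clifford's theorem plus a counting argument over the ten divisor classes $E_i+E_j-E_k$ to show that \emph{some} choice is acyclic, without identifying which. Your proposal treats this as routine bookkeeping, but it is where the proof actually lives.
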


We think that we can generalize this result to any surface isogenous to a higher product with $p_g=q=0$. The following conjecture has also appeared in \cite{GS}.

\begin{conjecture}
Let $S$ be a  surface isogenous to a higher product with $p_g=q=0$. Then there are exceptional sequences of line bundles of maximal length 4 on $S$.
\end{conjecture}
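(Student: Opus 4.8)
The plan is to prove the conjecture by extending, uniformly across the classification of \cite{bcg}, the strategy used for the four groups of the Theorem. The starting point is that on any surface $S$ with $p_g=q=0$ every line bundle is exceptional, since $\mathrm{Ext}^1(L,L)=H^1(\O_S)=0$ and $\mathrm{Ext}^2(L,L)=H^0(K_S)^\vee=0$; hence it suffices to produce four line bundles $L_1,\dots,L_4$ with $H^\ast(S,L_j\otimes L_i^{-1})=0$ for all $i>j$. Since the quotient map $\pi\colon C\times D\to S$ is \'etale Galois with group $G$, cohomology is computed by $G$-invariants, $H^\ast(S,L)=\bigl(H^\ast(C\times D,\pi^\ast L)\bigr)^G$. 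Because $q=0$ forces $C/G$ and $D/G$ to be rational, $S$ carries two fibrations $f\colon S\to C/G=\P^1$ and $h\colon S\to D/G=\P^1$, and I would restrict attention to the line bundles $\chi\otimes f^\ast\O_{\P^1}(a)\otimes h^\ast\O_{\P^1}(b)$ with $\chi\in\widehat{G^{\mathrm{ab}}}$; for these, $\pi^\ast L$ is a $G$-equivariant box product, so the K\"{u}nneth formula turns each vanishing into a representation-theoretic condition on the $G$-invariants of an explicit tensor product of cohomology modules of the two curves.

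The representation-theoretic input for each family is the $G$-module structure of $H^0(C,K_C)$, $H^0(D,K_D)$ and of their twists by the bundles pulled back from $\P^1$; all of this is computable from the branching data recorded in \cite{bcg} through the Chevalley--Weil formula, so in principle every vanishing condition reduces to a finite character computation. The candidate collection would be loosely modeled on a full exceptional collection of line bundles on $\P^1\times\P^1$ refined by the torsion characters, and I would aim to prove a single vanishing lemma expressing $H^\ast\bigl(S,\chi\otimes f^\ast\O(a)\otimes h^\ast\O(b)\bigr)$ in terms of the multiplicities of $\chi$ in the $G$-modules $H^1(C,\O_C)$, $H^1(D,\O_D)$ and $H^1(C,\O_C)\otimes H^1(D,\O_D)$.

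The hard part, and the reason the statement is only a conjecture, is the absence of a uniform recipe guaranteeing a solution. When $G^{\mathrm{ab}}$ is large the torsion characters alone supply enough acyclic differences and the search is essentially combinatorial; but for groups with small abelianization --- already visible for $S_4$, where $G^{\mathrm{ab}}=\Z/2$ provides only the trivial and the sign characters --- one is forced to combine the few available characters with fiber-class twists, and the existence of a working quadruple depends delicately on the non-abelian representation theory of $G$. The core obstacle is thus twofold: to find a construction of the four bundles canonical enough to run for every $G$ in the finite list of \cite{bcg}, and to prove that the resulting system of invariant-vanishing conditions is always satisfiable, ruling out the a priori possibility that some family admits no length-$4$ collection of line bundles with the required acyclicity pattern. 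Failing a uniform argument, the realistic route is to finish the classification case by case, treating each remaining family exactly as the four groups of the Theorem are treated.

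Finally, the geometric payoff comes for free once such a collection exists: by Hochschild--Kostant--Rosenberg the Hochschild homology of $S$ is concentrated in degree $0$ with $HH_0(S)=\C^4$ (as $p_g=q=0$), so additivity over the semiorthogonal decomposition forces $HH_\ast$ of the orthogonal complement to vanish, while its Grothendieck group retains the finite torsion of $K_0(S)$ --- reproducing the quasiphantom conclusion of the Theorem for every family.
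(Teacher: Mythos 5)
The statement you are addressing is stated in the paper only as a conjecture: the authors prove it for four specific groups and offer no general argument, so there is no ``paper proof'' to match, and your text --- which candidly explains why you cannot close the argument --- is a research plan rather than a proof. That part is honest and consistent with the paper. However, your plan contains a concrete error that would sink it even as a strategy. You propose to search among line bundles of the form $\chi\otimes f^{\ast}\O_{\P^1}(a)\otimes h^{\ast}\O_{\P^1}(b)$ with $\chi$ a torsion character. Numerically these all lie in the sublattice of $\mathrm{Pic}(S)/\mathrm{Tors}$ spanned by the two fiber classes, and a fiber of $f$ (respectively $h$) is $(g_2-1)$ times (respectively $(g_1-1)$ times) a generator $\O(1,0)$ (respectively $\O(0,1)$) of that lattice. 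Since $\chi\bigl(\O(i,j)\bigr)=(i-1)(j-1)$, a difference $L_j\otimes L_i^{-1}$ of two of your candidates is acyclic only if $a'(g_2-1)=1$ or $b'(g_1-1)=1$, which is impossible for the genera occurring in these families (e.g.\ $g_1=3$, $g_2=9$ for $D_4\times\Z/2$); even two distinct torsion bundles fail, as $\chi(\O(0,0))=1$. So your candidate set cannot produce a single exceptional pair, let alone a length-$4$ collection.

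This is precisely the difficulty the paper is organized around: one must realize the numerical \emph{halves} $\O(1,0)$, $\O(0,1)$, i.e.\ descend degree-$(g_i-1)$ line bundles (in practice, acyclic $G$-invariant theta characteristics) from the curves $C_i$, and these are never pullbacks from $C_i/G\cong\P^1$. The obstruction to equipping such a $G$-invariant bundle with a $G$-equivariant structure lives in $H^2(G,\C^{\times})$, and when it is nonzero the paper's device is to pair a non-equivariant $\L$ on $C_1$ with a non-equivariant $\M$ on $C_2$ satisfying $\eta_{\L}\cdot\eta_{\M}=0$ (guaranteed to exist by Dolgachev's surjectivity of $\mathrm{Pic}(X)^G\to H^2(G,\C^{\times})$ for curves), so that $\L\boxtimes\M$ descends to $S$; the acyclicity is then checked via Beauville's lemma on towers of quotients, not via Chevalley--Weil alone. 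Your K\"unneth/invariants framework and the closing Hochschild-homology remark are fine, but any viable attack on the conjecture must start from these degree-$(g_i-1)$ bundles and their equivariance obstructions, not from bundles pulled back from the two $\P^1$'s.
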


We recall some basic facts about these surfaces (see \cite{bauercat, Pardini} for more details) and sketch the idea of the construction. Let $S = (C_1 \times C_2) / G$ be one of them. We have $C_i / G \cong \P^1$, $|G| = (g_1 - 1)(g_2 - 1)$ where $g_i$ is the genus of $C_i$. Since $p_g = q = 0$, the Chern class map $Pic(S) \to H^2(S,\Z)$ is an isomorphism. It follows from the Noether's formula that $H^2(S,\Z)$ has rank 2. Thus up to a finite torsion subgroup $Pic(S)$ is an unimodular indefinite lattice of rank 2, that is a hyperbolic plane. Let $p_i : C_1 \times C_2 \to C_i$ be the projections and denote by $\mathcal{F} \boxtimes \mathcal{G} = p_1^{\ast}(\mathcal{F}) \otimes p_2^{\ast}(\mathcal{G})$ the external tensor product of coherent sheaves $\mathcal{F}$ and $\mathcal{G}$ on $C_1$ and $C_2$. Let us denote by $\O(2,0)$ and $\O(0,2)$ the classes of $p^G_{\ast}p_1^{\ast}(\Omega^1_{C_1})$ and $p^G_{\ast}p_2^{\ast}(\Omega^1_{C_2})$ in the lattice $H^2(S,\Z) / Tors$. Then we have $\O(2,0)^2 = 0$, $\O(0,2)^2 = 0$, $\O(2,0) \cdot \O(0,2) = 4$. We see that the lattice $H^2(S,\Z) / Tors$ must be generated by some numerical halves $\O(1,0)$ and $\O(0,1)$ of canonical classes of curves $C_1$, $C_2$. The Euler characteristic of a line bundle on $S$ of numerical type $\O(i,j)$ is $(i-1)(j-1)$. 

The category $\text{coh}(S)$ of coherent sheaves on $S$ is equivalent to the category of $G$-equivariant coherent sheaves on $C_1 \times C_2$ and we denote the functor 
$$\text{coh}^G(C_1 \times C_2) \to \text{coh}(S)$$
by $p^G_{\ast}$. Therefore we are going to construct exceptional sequences of line bundles in $D^b(S)$ by constructing exceptional sequences of $G$-equivariant line bundles in $D^b_G(C_1 \times C_2)$. Recall the definition of exceptional sequence.

\begin{definition}
(1) An object $E$ of a triangulated category $D$ is called exceptional if
$$
Hom^k(E, E) = \left\{
\begin{array}{l l}
\C, & \text{ if } k = 0, \\
0, & otherwise
\end{array}
\right.
$$
(2) A sequence of exceptional objects $E_1, \dots, E_n$ is called exceptional if
$$
Hom^k(E_i, E_j) = 0
$$
for all $i > j$ and all $k$.
\end{definition}

From the definition it is clear that when $\L, \O$ is an exceptional sequence then $\chi(\L)$ should be $0$. Therefore we need some numerical halves $\O(1,0)$ and $\O(0,1)$ of canonical classes of curves $C_1$, $C_2$ to construct exceptional sequence of line bundles. However there are some cases when we cannot give a $G$-equivariant structure to the numerical halves of canonical bundles. In these cases we construct equivariant bundles on $C_1 \times C_2$ by finding two divisors $D_1$ and $D_2$ on $C_1$ and $C_2$ such that each of them is not equivariant on the curve $C_i$ but they have inverse obstructions and therefore $p_1^{\ast}\O(D_1) \otimes p_2^{\ast}\O(D_2)$ is equivariant on the product. From now on we will omit $p_{\ast}^G$, $p_1^{\ast}$ and $p_2^{\ast}$ from our notation.

Let us explain how this is possible. For any variety $X$ with an action of a finite group $G$ there is an exact sequence
\begin{equation}\label{exseq}
0 \to \widehat G \to Pic^G(X) \to Pic(X)^G \to H^2(G, \C^\times),
\end{equation}
where $\widehat G = Hom(G, \C^\times)$ is the group of characters of $G$, $Pic^G(X)$ is the group of $G$-equivariant line bundles on $X$ and $Pic(X)^G$ is the group of line bundles whose classes in the Picard group are invariant under the action of $G$. The last map in \eqref{exseq} providing the obstruction to the existence of an equivariant structure on a line bundle $\L$ may be described as follows. Fix $\L$ in $Pic(X)^G$. For each $g \in G$ pick some isomorphisms $\phi_g : g_{\ast} \L \to \L$. Then 
\begin{equation}\label{obstr}
\eta_{\L}(g,h) = \phi_g \cdot g_{\ast} \phi_h \cdot \phi_{gh}^{-1}
\end{equation}
is an automorphism of $\L$, that is, an element of $\C^{\times}$. Therefore $\eta_{\L}$ is a 2-cocycle in $Z^2(G,\C^{\times})$ and the map $Pic(X)^G \to H^2(G, \C^\times)$ is given by $\L \mapsto \eta_{\L}$.

Let $S=(C_1 \times C_2)/G$ be surface isogenous to a higher product of unmixed type with $p_g=q=0$ and $g_1 \leq g_2$. For $G=D_4 \times \Z/2, S_4, S_4 \times \Z/2$ cases we cannot give $G$-equivariant structure to any half of canonical line bundle on $C_1$. What we can do is to construct $G$-invariant acyclic line bundle $\L$ on $C_1$ which is a numerical half of the canonical line bundle but not $G$-equivariant. Then we need to find a $G$-invariant line bundle $\M$ such that $\eta_{\L} \cdot \eta_{\M} = 0$ in $H^2(G, \C^\times)$ to make $\L \boxtimes \M$ a $G$-equivariant acyclic line bundle on $C_1 \times C_2$. The following proposition of Dolgachev \cite{dolgachev} tells us that we can always find such a line bundle.

\begin{proposition} \cite[Proposition 2.2]{dolgachev} \label{dolgachevproposition}
If $X$ is a curve, then the map $Pic(X)^G \to H^2(G, \C^\times)$ is surjective.
\end{proposition}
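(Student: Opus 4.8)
The plan is to identify the obstruction map of \eqref{exseq} with a differential in a spectral sequence and to reduce surjectivity to a vanishing statement that is controlled by the stabilizers of the $G$-action, all of which are cyclic because $X$ is a curve.

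First I would record that the obstruction map $c : Pic(X)^G \to H^2(G,\C^\times)$, $\L \mapsto \eta_\L$, is a group homomorphism: the cocycle in \eqref{obstr} is multiplicative in $\L$, so $\eta_{\L \otimes \M}$ and $\eta_\L \cdot \eta_\M$ agree in $H^2(G,\C^\times)$. Hence its image is a subgroup and it suffices to realize every class. I would also recall the extension-theoretic meaning of $\eta_\L$: for $\L \in Pic(X)^G$ the automorphisms of $\L$ covering elements of $G$ form a group fitting into $1 \to \C^\times \to \widetilde G_\L \to G \to 1$, and $\eta_\L$ is the class of this central extension. Thus surjectivity is the statement that every central extension of $G$ by $\C^\times$ is realized by some invariant line bundle.

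Second, I would set up the Hochschild--Serre spectral sequence $E_2^{p,q} = H^p(G, H^q(X, \O_X^{\times})) \Rightarrow H^{p+q}_G(X,\O_X^{\times})$. Since $X$ is a smooth projective curve we have $H^0(X,\O_X^{\times}) = \C^\times$ and $H^1(X,\O_X^{\times}) = Pic(X)$, while the exponential sequence together with $H^q(X,\O_X) = 0$ and $H^{q+1}(X,\Z) = 0$ for $q \geq 2$ gives $H^q(X,\O_X^{\times}) = 0$ for $q \geq 2$. Only the rows $q=0,1$ survive, so the sequence collapses into a long exact sequence whose single nontrivial differential $d_2 : E_2^{0,1} = Pic(X)^G \to E_2^{2,0} = H^2(G,\C^\times)$ is exactly the obstruction map $c$. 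Exactness then identifies $\mathrm{coker}(c)$ with the kernel of $H^2_G(X,\O_X^{\times}) \to H^1(G, Pic(X))$; in particular $c$ is surjective if and only if the inflation map $H^2(G,\C^\times) \to H^2_G(X,\O_X^{\times})$ induced by $X \to \mathrm{pt}$ vanishes.

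Third, and this is where the geometry of the curve enters, I would prove this vanishing. The key input is that for a faithful action of a finite group on a smooth curve every point stabilizer $G_x$ is cyclic, so $H^2(G_x,\C^\times)=0$ and each class of $H^2(G,\C^\times)$ restricts trivially to every inertia group. Realizing a class $\alpha$ by a projective representation $G \to PGL(V)$, triviality on each $G_x$ means the action linearizes near each orbit, which lets one glue a ramified abelian cover of $X$ whose deck group is the central extension of class $\alpha$; the line bundle associated to this cover is then the desired $G$-invariant bundle with $\eta_\L = \alpha$. In cohomological terms, the inflation into $H^2_G(X,\O_X^{\times})$ is detected by the restrictions to the stabilizers together with the Brauer group of the quotient curve $X/G$, and both vanish, the former by cyclicity of inertia and the latter by Tsen's theorem. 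I expect the main obstacle to be making this last step precise: controlling $H^2_G(X,\O_X^{\times})$ in the presence of ramification, that is, relating the global inflation to the local data at the branch points and checking that no residual class survives. Once the local contributions are seen to vanish, surjectivity follows.
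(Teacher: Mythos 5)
Your first two steps are sound: the obstruction map is indeed a homomorphism, and identifying it with the differential $d_2 \colon H^0(G,Pic(X)) \to H^2(G,\C^\times)$ in the Hochschild--Serre spectral sequence for $H^\ast_G(X,\O_X^\times)$ correctly recovers the sequence \eqref{exseq} and shows that surjectivity is equivalent to the vanishing of the inflation map $H^2(G,\C^\times) \to H^2_G(X,\O_X^\times)$. The gap is in the third step, which is where the entire content of the proposition lives. You assert that this inflation ``is detected by the restrictions to the stabilizers together with the Brauer group of the quotient curve,'' but you never exhibit an exact sequence or local-to-global argument that justifies this detection principle; establishing it rigorously (a Tsen-type theorem for the quotient stack $[X/G]$, with control of the gerbe contributions at the stacky points) is essentially as hard as the proposition itself. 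The geometric variant is also not well formed as stated: a central extension of $G$ by $\C^\times$ is not a finite group, so one cannot speak of a ramified cover with that deck group without first reducing to a finite $\mu_n$-extension, and even then ``the line bundle associated to this cover'' with the required obstruction class is not constructed. You acknowledge this yourself, so the proposal is a correct reduction plus a plausibility argument, not a proof.

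For comparison, the paper does not prove this statement but cites Dolgachev, whose argument avoids $H^2_G(X,\O_X^\times)$ entirely by working with the function field $R_X$ and the two exact sequences $0 \to \C^\times \to R_X^\times \to R_X^\times/\C^\times \to 0$ and $0 \to R_X^\times/\C^\times \to Div(X) \to Pic(X) \to 0$ (the same sequences the paper uses to build diagram \eqref{diagram}). Since $Div(X)$ is a permutation $G$-module, Shapiro's lemma gives $H^1(G,Div(X)) = \bigoplus H^1(G_x,\Z) = 0$, so the connecting map $Pic(X)^G \to H^1(G,R_X^\times/\C^\times)$ is surjective; and $H^2(G,R_X^\times)$ embeds in the Brauer group of the function field of $X/G$, which vanishes by Tsen's theorem, so the connecting map $H^1(G,R_X^\times/\C^\times) \to H^2(G,\C^\times)$ is also surjective. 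The composite of these two connecting maps is exactly the obstruction map. Note that this route needs no cyclicity of stabilizers -- only that divisors form a permutation module and that $H^1(G_x,\Z)=Hom(G_x,\Z)=0$ -- whereas the vanishing you actually need, of classes in $H^2_G(X,\O_X^\times)$ rather than of restrictions to stabilizers, is the step your argument leaves open. If you replace your third step with this divisor-sequence chase, the proof closes.
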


In fact there are infinitely many such line bundles. Then we show that there are $G$-equivariant acyclic line bundles on $C_2$. From Serre duality, K\"{u}nneth formula and the Riemann-Roch theorem on the curves $C_1$, $C_2$ one obtains the following lemma. 

\begin{lemma} \label{collection}
Suppose that $\L$ is a $G$-invariant acyclic line bundle on $C_1$, $\M$ and $\N$ line bundles on $C_2$ such that $\M$ is $G$-invariant and $\eta_L \cdot \eta_M = 0$ in $H^2(G, \C^\times)$ and $\N$ is acyclic and admits $G$-equivariant structure on $C_2$. Then the sequence
$$
\L \boxtimes (\M \otimes \N)(\chi_1),\; \L \boxtimes \M(\chi_2),\; \O \boxtimes \N(\chi_3),\; \O
$$
is an exceptional collection on $S$. Here $\chi_i$ denote arbitrary characters of $G$ by which we can twist the equivariant structure.
\end{lemma}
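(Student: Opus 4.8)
The plan is to descend to the equivariant picture via the equivalence $\text{coh}(S) \cong \text{coh}^G(C_1 \times C_2)$ and to compute every morphism space through the K\"unneth formula on the product of curves. Denote the underlying (non-equivariant) line bundles of the four terms by $\mathcal{A}_1 = \L \boxtimes (\M \otimes \N)$, $\mathcal{A}_2 = \L \boxtimes \M$, $\mathcal{A}_3 = \O \boxtimes \N$ and $\mathcal{A}_4 = \O \boxtimes \O$. The first task is to check that each $\mathcal{A}_i$ admits a $G$-equivariant structure, so that it is the image under $p^G_\ast$ of a genuine object of $\text{coh}^G(C_1 \times C_2)$. For this I would use the exact sequence \eqref{exseq} together with the fact, visible from \eqref{obstr}, that the obstruction class is multiplicative, $\eta_{\mathcal{F} \otimes \mathcal{G}} = \eta_{\mathcal{F}} \cdot \eta_{\mathcal{G}}$, and is compatible with external products, so that $\eta_{\mathcal{F} \boxtimes \mathcal{G}} = \eta_{\mathcal{F}} \cdot \eta_{\mathcal{G}}$ in $H^2(G,\C^\times)$. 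Reading off the factors gives obstruction $\eta_{\L} \cdot \eta_{\M} = 0$ for $\mathcal{A}_2$, obstruction $\eta_{\L} \cdot \eta_{\M} \cdot \eta_{\N} = 0$ for $\mathcal{A}_1$ (using also $\eta_{\N} = 0$, since $\N$ is equivariant), and vanishing obstruction for $\mathcal{A}_3$ and $\mathcal{A}_4$ since each factor is equivariant. Thus all four bundles descend to $S$, and the character twists $\chi_i$ only change the chosen equivariant structure without altering the underlying bundle, so they are irrelevant to the cohomological vanishing below.

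Since $G$ is finite and we work over $\C$, taking $G$-invariants is exact, so that for all $i, j$
\[
\operatorname{Hom}^k_{D^b(S)}(E_i, E_j) \;=\; \operatorname{Ext}^k_{C_1 \times C_2}(\mathcal{A}_i, \mathcal{A}_j)^G .
\]
I would then expand the right-hand side by K\"unneth: for line bundles $L_i, L_i'$ on the two curves,
\[
\operatorname{Ext}^k_{C_1 \times C_2}(L_1 \boxtimes L_2,\, L_1' \boxtimes L_2') \;=\; \bigoplus_{p+q=k} H^p\!\big(C_1, L_1^{-1} \otimes L_1'\big) \otimes H^q\!\big(C_2, L_2^{-1} \otimes L_2'\big).
\]

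For exceptionality take $i = j$: both factor-differences are trivial, so the expression reduces to $H^k(C_1 \times C_2, \O)$, whose $G$-invariant part is $H^k(S, \O_S)$. As $p_g = q = 0$ this is $\C$ for $k = 0$ and $0$ for $k = 1, 2$; the twisting character cancels on self-morphisms, so each $E_i$ is exceptional regardless of $\chi_i$.

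For semiorthogonality I would run through the six pairs with $i > j$ and record the two factor-differences. For $(3,1), (3,2), (4,1)$ and $(4,2)$ the difference on $C_1$ is the acyclic bundle $\L$, so every K\"unneth summand carries a factor $H^p(C_1, \L) = 0$; for $(2,1)$ and $(4,3)$ the difference on $C_2$ is the acyclic bundle $\N$, so every summand carries a factor $H^q(C_2, \N) = 0$. Hence $\operatorname{Ext}^k_{C_1 \times C_2}(\mathcal{A}_i, \mathcal{A}_j)$ vanishes already before taking invariants, giving $\operatorname{Hom}^k(E_i, E_j) = 0$ for all $k$ whenever $i > j$. Here Serre duality and Riemann--Roch on $C_1$ and $C_2$ enter only in guaranteeing the acyclicity hypotheses (a numerical half-canonical bundle has $\chi = 0$ by Riemann--Roch, and Serre duality converts the vanishing of $H^0$ into that of $H^1$). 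The one step I expect to require genuine care is the equivariance bookkeeping of the first paragraph---verifying that the obstruction classes multiply as claimed so that all four bundles truly descend; once this is in place, the exceptional-collection property is a formal consequence of K\"unneth and the acyclicity of $\L$ and $\N$.
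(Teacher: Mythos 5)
Your argument is correct and is exactly the route the paper has in mind (the paper only asserts the lemma follows ``from Serre duality, K\"unneth formula and the Riemann--Roch theorem''): descend via the obstruction calculus to see all four bundles are equivariant, identify morphism spaces on $S$ with $G$-invariants of Ext groups on $C_1 \times C_2$, and observe that every K\"unneth summand for $i>j$ contains a factor $H^\ast(C_1,\L)$ or $H^\ast(C_2,\N)$, which vanish by acyclicity, while the diagonal case reduces to $H^\ast(S,\O_S)$ and $p_g=q=0$. Your pairing of the six cases (the $C_1$-factor $\L$ for $(3,1),(3,2),(4,1),(4,2)$ and the $C_2$-factor $\N$ for $(2,1),(4,3)$) checks out.
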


We will construct exceptional sequence of maximal length on $S=(C_1 \times C_2)/G$ by this method when $G=D_4 \times \Z/2, S_4, S_4 \times \Z/2$. When $G=G(16)$ then we can find acyclic $G$-equivariant line bundles on $C_1, C_2$ and the construction becomes much easier. \\

\noindent \textbf{Acknowledgement.} 
The authors of this paper met in the GRIFGA/Lebesgue summer school --- Derived Categories which was held from June 23rd to 27th in the University of Nantes. We thank organizers of the summer school for their kind hospitality. The first author is grateful to his advisor Young-Hoon Kiem for his encouragement and many suggestions for the first draft of this paper. He thanks Sergey Galkin, Ludmil Katzarkov, Minhyong Kim, Yongnam Lee, Miles Reid for helpful conversations and encouragement. He thanks Fabrizio Catanese for answering his questions and helpful conversations. He also thanks Seoul National University for its support during the preparation of this paper. The second author would like to thank his advisor Dmitri Orlov for his attention and encouragement and also Sergey Galkin and Yakov Kononov for helpful discussions.

\section{Invariant line bundles}

In this section we recall some results of Beauville \cite{beauville} about curves with $G$-action and invariant line bundles on them which will be extremely useful for our construction. Let $C$ be a curve and $G$ be a group acting on $C$. Let $B$ be the quotient curve, $\pi: C \to B$ the quotient map. Denote by $R_C$ the field of rational functions on the curve $C$. From the short exact sequence of $G$-modules $$ 0 \to \mathbb{C}^* \to R_C^* \to R_C^*/\mathbb{C}^* \to 0, $$ we get $$ 0 \to \mathbb{C}^* \to R_B^* \to (R_C^*/\mathbb{C}^*)^G \to H^1(G,\mathbb{C}^*) \to 0, $$
since $H^1(G,R_C^*)=0 $ by Hilbert's Theorem 90. From the short exact sequence of $G$-modules $$ 0 \to R_C^*/\mathbb{C}^* \to Div(C) \to Pic(C) \to 0, $$ we get the following commutative diagram
$$ \begin{CD}
0 @>>> R^*_B/{\mathbb{C}^*} @>>> Div(B) @>>> Pic(B) @>>> 0 \\
@. @VVV @VVV @VVV @. \\
0 @>>> (R^*_{C}/{\mathbb{C}^*})^G @>>> Div(C)^G @>>> Pic(C)^G @>>> H^1(G,R^*_{C}/{\mathbb{C}^*}).
\end{CD} $$
If we change the lower exact sequence by $$ 0 \to (R_C^*/\mathbb{C}^*)^G \to Div(C)^G \to Im \to 0, $$ we still have a commutative diagram and we can apply the snake lemma as follows.
\begin{equation} \label{diagram}
\begin{CD}
0 @>>> 0 @>>> 0 @>>> X @>>> @. \\
@. @VVV @VVV @VVV @. \\
0 @>>> R^*_{B}/{\mathbb{C}^*} @>>> Div(B) @>>> Pic(B) @>>> 0 \\
@. @VVV @VVV @VVV @. \\
0 @>>> (R^*_{C}/{\mathbb{C}^*})^G @>>> Div(C)^G @>>> Im @>>> 0 \\
@. @VVV @VVV @VVV @. \\
 @>>> H^1(G,\mathbb{C}^*) @>>> Y @>>> Z @>>> 0. \\
\end{CD}
\end{equation}
Sometimes we can compute $X, Y, Z$ explicitly and then the above diagram becomes very useful. For example when $B$ is isomorphic to $\mathbb{P}^1$ then we get $X=0$.

\begin{lemma}{\cite{beauville}}
Let $C$ be a curve with involution $\sigma$, $B$ be the quotient curve $C/\langle \sigma \rangle$. If the covering $\pi: C \to B$ is unramified, then $Y = Z = 0$ and $X \cong H^1(G, \C^\times) \cong \Z/2$. If the set $R$ of ramification points of $\pi$ is non-empty, then $X = 0$ and the last row is isomorphic to
$$ 0 \to \mathbb{Z}/2 \to (\mathbb{Z}/2)^R \to Pic(C)^{\sigma}/\pi^*Pic(B) \to 0, $$
where the kernel is generated by $(1,\dots,1)$.
\end{lemma}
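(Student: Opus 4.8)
The plan is to apply the snake lemma to the diagram \eqref{diagram} and to compute the groups $X$, $Y$, $Z$ separately in the two cases, the whole analysis being controlled by the connecting map and its continuation
$$ 0 \to X \to H^1(G,\C^\times) \to Y \to Z \to 0, $$
which is exact because the first two vertical kernels in \eqref{diagram} vanish. Throughout I write $G = \la \sigma \ra \cong \Z/2$. First I would record two preliminary facts. Since $G$ acts trivially on $\C^\times$ one has $H^1(G,\C^\times) = Hom(G,\C^\times) \cong \Z/2$, and, by the identification of $H^1(G,\C^\times)$ with the cokernel $(R_C^\ast/\C^\times)^G/(R_B^\ast/\C^\times)$ obtained above, the nontrivial class is represented by an anti-invariant rational function $f$ on $C$, i.e.\ one with $\sigma^\ast f = -f$ (the invariant functions being exactly the ones pulled back from $B$). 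Second, I would note once and for all that $Im = Pic(C)^\sigma$: the long exact sequence attached to $0 \to \C^\times \to R_C^\ast \to R_C^\ast/\C^\times \to 0$ together with Hilbert's Theorem 90 gives $H^1(G,R_C^\ast/\C^\times) \hookrightarrow H^2(G,\C^\times)$, and $H^2(\Z/2,\C^\times) = \C^\times/(\C^\times)^2 = 0$, so $Div(C)^\sigma \to Pic(C)^\sigma$ is already surjective. Hence $Z = Pic(C)^\sigma/\pi^\ast Pic(B)$, matching the target.

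For the unramified case I would argue that $\sigma$ acts freely, its fixed points being exactly the ramification points of $\pi$. Thus every orbit has length two and every $\sigma$-invariant divisor is a sum of orbit-sums $q + \sigma q = \pi^\ast(\pi(q))$, so $Div(C)^\sigma = \pi^\ast Div(B)$ and $Y = 0$. Substituting $Y = 0$ into the exact sequence above forces $X \cong H^1(G,\C^\times) \cong \Z/2$ and $Z = 0$, which is the first assertion.

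The heart of the argument is the ramified case $R \neq \emptyset$. I would first identify $Y$: the pullback $\pi^\ast$ sends an unbranched point to the sum of its two preimages and a branch point to $2p$ for the corresponding $p \in R$, whereas $Div(C)^\sigma$ is generated by these orbit-sums together with the single fixed points $p$; hence $Y = Div(C)^\sigma/\pi^\ast Div(B) \cong (\Z/2)^R$, the class of an invariant divisor being recorded by the parities of its multiplicities at the ramification points. By naturality of the snake sequence the connecting map $H^1(G,\C^\times) \to Y$ sends the class of $f$ to $[\operatorname{div}(f)]$, and the main obstacle — the step that separates the two cases — is to prove it injective. For this I would take the anti-invariant generator $f$ and compute $\operatorname{ord}_p(f)$ at each $p \in R$: choosing a local coordinate $t$ at the fixed point $p$ with $\sigma^\ast t = -t$ (the involution acts on the cotangent space at a ramification point of a double cover by $-1$, and one may linearize) and writing $f = t^m u$ with $u(p) \neq 0$, the relation $\sigma^\ast f = -f$ forces $(-1)^m = -1$, so $m$ is odd at every $p \in R$. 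Therefore $[\operatorname{div}(f)] = (1,\dots,1) \in (\Z/2)^R$, which is nonzero, so $H^1(G,\C^\times) \to Y$ is injective and $X = \ker\bigl(H^1(G,\C^\times) \to Y\bigr) = 0$. The tail of the snake sequence then reads
$$ 0 \to \Z/2 \to (\Z/2)^R \to Pic(C)^\sigma/\pi^\ast Pic(B) \to 0 $$
with kernel generated by $(1,\dots,1)$, as required. I expect no difficulty beyond the local computation at the ramification points; everything else is bookkeeping with the snake lemma and the two short exact sequences already assembled above.
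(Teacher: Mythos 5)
Your proof is correct and follows exactly the route the paper intends: the paper states this lemma without proof (deferring to Beauville) after setting up the snake-lemma diagram \eqref{diagram}, and your argument --- computing $Y$ from the orbit structure of $Div(C)^\sigma$, identifying $Im$ with $Pic(C)^\sigma$ via $H^2(\Z/2,\C^\times)=0$, and showing the anti-invariant function has odd order at every fixed point so that $H^1(G,\C^\times)\to(\Z/2)^R$ hits $(1,\dots,1)$ --- is precisely the computation of $X$, $Y$, $Z$ that the diagram is built to deliver. No gaps.
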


The next result of Beauville will be very important for our computations.

\begin{lemma}{\cite{beauville}}\label{beauvillelemma}
In the situation of the previous lemma, let $\L$ be a $\sigma$-invariant theta characteristic on $C$. There are some $\L' \in Pic(B)$ and $E \subset R$ such that $\L = \pi^*(\L')(E)$. Then we also have $\L = \pi^*(K_B \otimes \L'^{-1})(R - E)$. The pushforward of $\L$ splits: $\pi_*(\L) = \L' \oplus (K_B \otimes \L'^{-1})$ and we have $$ H^0(C, \L) = H^0(B, \L') \oplus H^1(B, \L')^*. $$
\end{lemma}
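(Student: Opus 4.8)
The plan is to derive all four assertions from four ingredients: the exact sequence of the previous lemma, the ramification (theta-characteristic) identity for the double cover, the projection formula, and relative duality for $\pi$. Throughout I write $R$ also for the reduced ramification divisor, $\pi(R)$ for the branch divisor, and I record the standard double-cover data $\pi_{\ast}\O_C \cong \O_B \oplus \mathcal{Q}^{-1}$ with $\mathcal{Q}^{\otimes 2} \cong \O_B(\pi(R))$ and $\pi^{\ast}\mathcal{Q} \cong \O_C(R)$.

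First, the two presentations of $\L$. Since $\L$ is $\sigma$-invariant its class lies in $Pic(C)^{\sigma}$, so the surjectivity of $(\Z/2)^R \to Pic(C)^{\sigma}/\pi^{\ast}Pic(B)$ from the previous lemma provides $E \subseteq R$ and $\L' \in Pic(B)$ with $\L \cong \pi^{\ast}(\L')(E)$; this is the first claim. For the second, I use the ramification formula $K_C \cong \pi^{\ast}K_B(R)$ for the branched double cover. The theta-characteristic condition $\L^{\otimes 2} \cong K_C$ then reads $\pi^{\ast}(\L'^{\otimes 2})(2E) \cong \pi^{\ast}K_B(R)$, and rearranging this identity shows $\pi^{\ast}(K_B \otimes \L'^{-1})(R-E) \cong \pi^{\ast}(\L')(E) \cong \L$, which is the alternative presentation.

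Second, the splitting, which I expect to be the main obstacle. A local computation at the ramification points gives $\pi_{\ast}\O_C(E) \cong \O_B \oplus \mathcal{Q}^{-1}(\pi(E))$, so the projection formula applied to $\L \cong \pi^{\ast}(\L')(E)$ yields $\pi_{\ast}\L \cong \L' \oplus \L'\otimes\mathcal{Q}^{-1}(\pi(E))$ as a bundle. To identify the second summand I invoke relative duality: since $\omega_{C/B} \cong \O_C(R)$ and, again from $\L^{\otimes 2} \cong K_C$, one has $\L^{-1}(R) \cong \L \otimes \pi^{\ast}K_B^{-1}$, duality for the finite morphism $\pi$ gives $(\pi_{\ast}\L)^{\vee} \cong K_B^{-1} \otimes \pi_{\ast}\L$. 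Comparing the two line-bundle decompositions of this self-dual (up to $K_B$) rank-two bundle forces $\L'\otimes\mathcal{Q}^{-1}(\pi(E)) \cong K_B \otimes \L'^{-1}$, hence $\pi_{\ast}\L \cong \L' \oplus (K_B \otimes \L'^{-1})$. The delicate point here is the linearization/parity bookkeeping — the $\sigma$-action identifies the invariant summand with $\L'$ under one presentation and with $K_B\otimes\L'^{-1}$ under the other, so one must compare bundles rather than eigensheaves — together with the degenerate case $\L'^{\otimes 2} \cong K_B$, in which the duality comparison alone does not separate the two summands. This last case is settled by noting that $\pi^{\ast} \colon Pic(B) \to Pic(C)$ is injective for a ramified double cover: if $\pi^{\ast}N \cong \O_C$ then $N \oplus N\mathcal{Q}^{-1} \cong \O_B \oplus \mathcal{Q}^{-1}$, and since $\mathcal{Q}$ is non-torsion when $R \neq \emptyset$ this forces $N \cong \O_B$, which pins down the second summand exactly.

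Finally, the cohomology identity is immediate from the splitting. As $\pi$ is finite, $\pi_{\ast}$ is exact and $H^{\bullet}(C,\L) \cong H^{\bullet}(B,\pi_{\ast}\L)$, so $H^0(C,\L) \cong H^0(B,\L') \oplus H^0(B, K_B \otimes \L'^{-1})$, and Serre duality on $B$ identifies the second summand with $H^1(B,\L')^{\ast}$, giving $H^0(C,\L) \cong H^0(B,\L') \oplus H^1(B,\L')^{\ast}$.
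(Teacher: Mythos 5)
The paper does not prove this lemma at all: it is quoted verbatim from Beauville's paper on vanishing thetanulls and used as a black box, so there is no in-house argument to compare yours against. Your reconstruction is correct in the ramified case $R\neq\emptyset$, which is the only case the authors ever invoke (hyperelliptic involutions and the various intermediate quotients all have fixed points). Two remarks. First, your detour through relative duality is unnecessary: once you know $\pi^{*}$ is injective on $Pic(B)$ for a ramified double cover (your own argument via $\pi_{*}\pi^{*}N\cong N\oplus N\otimes\mathcal{Q}^{-1}$ and $\deg\mathcal{Q}>0$), the theta-characteristic relation $\pi^{*}\bigl(\L'^{\otimes 2}\otimes\O_B(\pi(E))\bigr)\cong\pi^{*}(K_B\otimes\mathcal{Q})$ descends directly to $\L'^{\otimes 2}\otimes\O_B(\pi(E))\cong K_B\otimes\mathcal{Q}$, which identifies the anti-invariant summand $\L'\otimes\mathcal{Q}^{-1}(\pi(E))$ with $K_B\otimes\L'^{-1}$ in one line, with no Krull--Schmidt comparison and no degenerate case $\L'^{\otimes 2}\cong K_B$ to treat separately. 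Second, a caveat on scope: the lemma as stated says ``in the situation of the previous lemma,'' which formally includes the \'etale case $R=\emptyset$; there $\pi^{*}$ has a $\Z/2$ kernel, your injectivity step fails, and indeed for $\L=\pi^{*}\theta$ with $\theta$ a theta characteristic on $B$ the asserted splitting $\pi_{*}\L\cong\L'\oplus(K_B\otimes\L'^{-1})$ need not hold (one gets $\theta\oplus\theta\otimes\eta$ instead). Since every application in the paper is ramified this does not affect anything, but your proof should be flagged as covering only $R\neq\emptyset$.
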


\section{Case $G = D_4 \times \Z/2$} \label{sectiond4}

The group $G$ has a presentation 
$$
\langle x,y,z \mid x^4 = y^2 = z^2 = [x,z] = [y,z] = 1, x^y = x^{-1} \rangle,
$$
where $x^y = y^{-1}xy$, $[x,y] = xyx^{-1}y^{-1}$.

A covering $\pi: C \to \P^1$ with Galois group $G$ can be specified by its ramification type $(m_1^{k_1}, \dots, m_l^{k_l})$ which means that $\pi$ has $k_i$ ramification points of multiplicity $m_i$ and by the tuple of generators $(g_1, \dots, g_n)$, $g_i \in G$, $n = k_1 + \dots + k_l$ such that a simple geometric loop around $j$-th ramification point on $\P^1$ lifts to the action of $g_j$ on $C$. We must have $g_1 \dots g_n = 1$ and $g_1, \dots, g_n$ must generate $G$. Of course these data do not specify the covering completely because one can move the ramification points on $\P^1$. If $p_1, \dots, p_n \in \P^1$ are the ramification points then we will denote by $E_i$ the reduced fiber of $\pi$ over $p_i$.

The covering $C_1 \to \P^1$ has ramification type $(2^3, 4)$ and the corresponding elements of $G$ are $(z, yz, xy, x)$. The covering $C_2 \to \P^1$ has ramification type $(2^6)$ and the corresponding tuple is $(y, x^3yz, x^2y, x^3yz, x^2z, x^2z)$. The curve $C_1$ has genus 3 ($2g-2 = 4$), $C_2$ has genus 9 ($2g-2 = 16$). Divisors $E_1,E_2,E_3$ on $C_1$ have degree 8 and $E_4$ has degree 4. All divisors $E_i$ on $C_2$ consist of 8 points. 

\begin{lemma}
There is a $G$-invariant theta characteristic $\L$ on $C_1$ which has no sections.
\end{lemma}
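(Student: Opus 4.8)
The plan is to show that $C_1$ is hyperelliptic and to produce the theta characteristic from a ``half'' of the hyperelliptic fibre, using Beauville's Lemma~\ref{beauvillelemma} to kill the sections and the $G$-action on the Weierstrass points to force invariance.

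First I would quotient by the central involution $z$. The inertia over $p_1$ is $\langle z\rangle$, and since $z$ is central this group is the stabiliser of \emph{every} point of the reduced fibre $E_1$; on the other hand $z$ is not conjugate into any of the inertia groups $\langle yz\rangle$, $\langle xy\rangle$, $\langle x\rangle$ over $p_2,p_3,p_4$, so the fixed locus of $z$ is exactly the $8$ points of $E_1$. Riemann--Hurwitz then gives $4 = 2(2g_B-2)+8$, so $B:=C_1/\langle z\rangle\cong\P^1$ and $z$ is the hyperelliptic involution, with Weierstrass points $w_1,\dots,w_8$ the points of $E_1$. Write $\pi\colon C_1\to\P^1$ for this double cover and $g^1_2=\pi^*\O_{\P^1}(1)$, so that $2w_i\sim g^1_2$ and $E_1=\sum_i w_i\sim 4g^1_2$.

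Next, for any $4$-element subset $E\subset E_1$ I would set $\L=\O(E)\otimes\pi^*\O_{\P^1}(-1)$. This is a theta characteristic: Hurwitz gives $K_{C_1}=\pi^*\O_{\P^1}(-2)(E_1)$, while $\L^{\otimes 2}=\pi^*\O_{\P^1}(-2)(2E)$ and $2E\sim 4g^1_2\sim E_1$. Since each $w_i$ and $g^1_2$ is fixed by $z$, the bundle $\L$ is $z$-invariant, so Beauville's Lemma~\ref{beauvillelemma} applies with $\L'=\O_{\P^1}(-1)$ and yields $H^0(C_1,\L)=H^0(\P^1,\O(-1))\oplus H^1(\P^1,\O(-1))^*=0$. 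Thus \emph{every} such $\L$ is acyclic; it remains only to choose $E$ so that $\L$ becomes $G$-invariant.

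For invariance, note that $g^1_2=\pi^*\O_{\P^1}(1)$ is $G$-invariant because the $G$-action descends to automorphisms of $\P^1$, which fix $\mathrm{Pic}(\P^1)$. Hence $g^*\L\cong\L$ for all $g$ iff the class $\sum_{w\in E}w$ is $G$-invariant. Any two equal-size Weierstrass sums differ by the $2$-torsion class $e_{E\triangle E'}$ (in the standard description of $\mathrm{Jac}(C_1)[2]$ for a hyperelliptic curve), which vanishes precisely when $E'\in\{E,E^c\}$; so $\L$ is $G$-invariant iff the partition $\{E,E^c\}$ is stable under $G$. Since $G$ acts on $E_1\cong G/\langle z\rangle\cong D_4$ by left translation, I would take $E$ to be the preimage of an index-two subgroup $\bar H\le D_4$: then $\{E,E^c\}$ is exactly the pair of cosets of $\bar H$, which is visibly $G$-stable. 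The resulting $\L$ is the desired $G$-invariant theta characteristic with no sections. The computation of $h^0$ is immediate once we are in the hyperelliptic picture; the real work, and the main obstacle, is the last step: translating $G$-invariance into the combinatorial condition on $E$ via the $2$-torsion identity and then realising it through an index-two subgroup of $D_4$.
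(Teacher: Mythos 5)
Your construction is correct and in substance coincides with the paper's: both identify the central involution $z$ as a hyperelliptic involution with Weierstrass locus $E_1$, both take $\L=\O(E)\otimes\pi^{*}\O_{\P^1}(-1)$ for a suitable $4$-point half $E$ of $E_1$ (the paper writes this as $\O(B_1-B_2)$ with $B_2$ a full fibre of $\pi$, which is the same bundle), and both kill the sections by applying Beauville's Lemma \ref{beauvillelemma} with $\L'=\O_{\P^1}(-1)$. The one step you argue genuinely differently is $G$-invariance. The paper chooses $B_1$ to be a free orbit of the normal subgroup $\la x^2,xy\ra$, whose quotient curve again has genus $0$; every $G$-translate of $B_1$ is then another fibre of a second map to $\P^1$ and hence linearly equivalent to $B_1$, with no mention of torsion. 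You instead invoke the standard description of the $2$-torsion of a hyperelliptic Jacobian to show $\O(E)\cong\O(E')$ iff $E'\in\{E,E^{c}\}$, reduce invariance to $G$-stability of the partition $\{E,E^{c}\}$, and realise this by taking $E$ to be a coset of an index-two subgroup of $D_4\cong G/\la z\ra$ acting on $E_1$ by left translation. The two recipes yield the same bundles --- the paper's $B_1$ is exactly a coset of the image of $\la x^2,xy\ra$ in $D_4$ --- but yours additionally classifies \emph{all} invariant choices of $E$ (and shows invariance fails for the others), at the cost of importing the $2$-torsion combinatorics, whereas the paper's normal-subgroup argument is shorter and entirely elementary.
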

\begin{proof}
Consider the mapping $\pi: C_1 \to C_1 / \la z \ra$. The quotient has genus 0, so $\pi$ is a hyperelliptic structure on $C_1$. It is ramified in the 8 points of $E_1$. The quotient of $C_1$ by subgroup $\la x^2, xy \ra$ also has genus 0. The divisor $E_1$ consists of two $\la x^2, xy \ra$-orbits. Let $B_1$ be one of them. Let also $B_2$ be any full fiber of $\pi$. Since the subgroups $\la x^2, xy \ra$ and $\la z \ra$ are normal in $G$, divisors $B_1$ and $B_2$ have $G$-invariant classes in the Picard group. Let $L = B_1 - B_2$ and $\L = \O(L)$. The canonical class of $C_1$ is equivalent to $E_1 - 2B_2 \sim 2B_1 - 2B_2$, so $L$ is a theta characteristic. By Lemma \ref{beauvillelemma} applied to $\pi$ we have $h^0(C_1, L) = 2h^0(\P^1, \O(-1)) = 0$.
\end{proof}

The proposition \ref{dolgachevproposition} implies the next lemma.

\begin{lemma}
There is a $G$-invariant line bundle $\M$ on $C_2$ such that $\eta_\L \cdot \eta_\M = 0$.
\end{lemma}

Now we construct an explicit $G$-equivariant acyclic theta characteristic on $C_2$.

\begin{lemma}
There is a $G$-equivariant theta characteristic $\N$ on $C_2$ which has no sections.
\end{lemma}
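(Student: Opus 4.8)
The plan is to apply Beauville's Lemma \ref{beauvillelemma} to a carefully chosen involution inside $G$, namely the central element $\sigma = x^2 z$. First I would record that $\sigma$ is central (so its only conjugate is itself) and that it coincides with the monodromy generators $g_5 = g_6 = x^2 z$, while it is conjugate to none of $g_1, g_2, g_3, g_4$. Consequently a point of $C_2$ is fixed by $\sigma$ exactly when it lies over $p_5$ or $p_6$, and since $\la \sigma \ra$ is central every one of the $8$ points in each of these fibers is fixed. Thus $\sigma$ has $16$ fixed points, the cover $\pi : C_2 \to B := C_2 / \la \sigma \ra$ is ramified precisely along the $G$-stable reduced divisor $R = E_5 + E_6$, and Riemann--Hurwitz ($16 = 4g_B - 4 + 16$) gives $g_B = 1$, so $B$ is an elliptic curve. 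Centrality of $\sigma$ also makes $\bar G = G/\la \sigma \ra \cong D_4$ act on $B$, with $\pi$ a $G$-equivariant map.

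Next I would construct the theta characteristic together with its linearization. Since $K_B = \O_B$, the double cover formula gives $K_{C_2} \cong \pi^*K_B(R) = \O(E_5 + E_6)$, so I look for a square root of the shape provided by Lemma \ref{beauvillelemma}, namely $\N = \pi^*(\L')(E)$ with $E \subseteq R$ a $G$-stable sub-divisor and $\L' \in Pic(B)$ of degree $0$ (forced by the degree count $\deg \N = g_2 - 1 = 8 = \tfrac12 \deg R$). The relation $2\N \sim K_{C_2}$ together with $2E_i \sim F$ and $2E = \pi^*\pi_*E$ determines $\L'$ up to $2$-torsion on $B$. Choosing $E$ to be a $G$-stable subset of $R$ and $\L'$ to be $\bar G$-equivariant makes $\N$ the line bundle of a genuinely $G$-stable datum, which endows it with a canonical $G$-linearization (equivalently, forces its obstruction class in $H^2(G,\C^\times)$ to vanish); any residual character ambiguity is absorbed by the twists $\chi_i$ allowed in Lemma \ref{collection}.

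Finally I would compute cohomology. Lemma \ref{beauvillelemma} yields the splitting $H^0(C_2, \N) = H^0(B, \L') \oplus H^1(B, \L')^*$. On the elliptic curve $B$ a degree-$0$ line bundle $\L'$ has $H^0(B, \L') = 0$ unless $\L' \cong \O_B$, and dually $H^1(B, \L') = H^0(B, \L'^{-1})^* = 0$ unless $\L' \cong \O_B$. Hence, provided $\L'$ is nontrivial, both summands vanish and $\N$ is acyclic; being a theta characteristic, $h^1(\N) = h^0(\N) = 0$ then confirms it has no sections.

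The main obstacle is the interface between equivariance and acyclicity in the second and third paragraphs: I must choose the degree-$0$ square root $\L'$ to be simultaneously \emph{nontrivial} (so that no section appears on $B$) and $\bar G$-equivariant/honestly $G$-stable (so that $\N$ is $G$-equivariant rather than merely $G$-invariant). This is a $2$-torsion bookkeeping problem in $Pic(B)$ and $Pic(C_2)$, where one must identify which degree-$0$ classes are represented by the admissible $E \subseteq R$ and by the orbit divisors $E_i$, and then verify that the resulting $\L'$ is not the trivial bundle. I expect this nontriviality check to be the delicate point, and to rely on the explicit monodromy $(y, x^3yz, x^2y, x^3yz, x^2z, x^2z)$ and on the pushforward splitting recorded in Lemma \ref{beauvillelemma}.
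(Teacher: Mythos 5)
Your outline follows the same route as the paper: you identify the central involution $x^2z$ (which equals $g_5=g_6$ and is conjugate to none of $g_1,\dots,g_4$), so the quotient $\pi\colon C_2\to B=C_2/\la x^2z\ra$ is ramified exactly along $E_5\cup E_6$ and $B$ is elliptic; you then reduce everything to exhibiting a degree-zero $\bar G$-stable $\L'$ on $B$ with $\N=\pi^*(\L')(E_5)$ and $\L'\not\cong\O_B$. That is precisely the paper's setup. However, there is a genuine gap: the entire mathematical content of the lemma is the production of such an $\L'$ and the proof that it is nontrivial, and your proposal explicitly defers both ("I expect this nontriviality check to be the delicate point"). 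As written, nothing rules out that every admissible $2$-torsion choice of $\L'$ compatible with $G$-stability is the trivial bundle, in which case $h^0(B,\L')=h^1(B,\L')=1$ and the argument fails. Likewise, asserting that a $G$-stable datum "endows $\N$ with a canonical $G$-linearization" is not automatic for a $\bar G$-invariant class $\L'$ on $B$; invariance of the isomorphism class does not by itself kill the obstruction in $H^2(G,\C^\times)$.

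The paper closes both gaps by working with an honest $G$-invariant divisor rather than an abstract class: it sets $N=E_1-E_2+E_5$, so $\O(N)$ carries a canonical equivariant structure because each $E_i$ is a literal $G$-invariant divisor. The theta-characteristic property is checked via the relations $E_1\sim E_3$, $E_2\sim E_4$, $E_5\sim E_6$, read off from three intermediate quotients of genus $0$ (by $\la xyz,x^2,z\ra$, $\la y,x^2,z\ra$, $\la y,xyz,x^2\ra$). Then $N=\pi^*N'+E_5$ with $N'=\bar E_1-\bar E_2$ of degree $0$ and $2N'\sim 0$ on $B$, and the crucial nontriviality $h^0(B,N')=0$ is obtained by a \emph{second} application of Lemma \ref{beauvillelemma}, this time to the induced action of $y$ on $B$, whose quotient is $\P^1$: this gives $h^0(C_2,N)=2h^0(B,N')=4h^0(\P^1,\O(-1))=0$. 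You would need to supply these concrete choices and the second Beauville step (or an equivalent nontriviality argument) for your plan to become a proof.
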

\begin{proof}
Let $N = E_1 - E_2 + E_5$ and $\N = \O(N)$. Quotients of $C_2$ by subgroups $\la xyz, x^2, z \ra, \la y, x^2, z \ra, \la y, xyz, x^2 \ra$ all have genus 0. From these three quotients we see that $E_1 \sim E_3$, $E_2 \sim E_4$, $E_5 \sim E_6$. It follows that $\N$ is a theta characteristic. Consider the quotient $\pi: C_2 \to C_2 / \la x^2z \ra$. We have $N = \pi^* N' + E_5$, where $N'$ is a divisor of degree 0. The curve $C_2 / \la x^2z \ra$ has genus 1. We have $2N' \sim 0$ which is the canonical class of $C_2 / \la x^2z \ra$. There is an induced action of $y$ on $C_2 / \la x^2z \ra$. Applying Lemma \ref{beauvillelemma} first to $\pi$, then to the quotient of $C_2 / \la x^2z \ra$ by $\la y \ra$ we find $h^0(C_2, N) = 2h^0(C_2 / \la x^2z \ra, N') = 4h^0(\P^1, \O(-1)) = 0$.
\end{proof}

From the above lemmas we get the following theorem.

\begin{theorem}
Let $S=(C_1 \times C_2)/G$ be a surface isogenous to a higher product with $p_g=q=0$ and $G = D_4 \times \Z/2$. Then there are exceptional sequences of line bundles of maximal length 4 on $S$.
\end{theorem}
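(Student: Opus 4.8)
The plan is to read off the theorem as an immediate application of Lemma \ref{collection}, whose hypotheses are supplied verbatim by the three lemmas just proved. Concretely, I would verify that the objects constructed above are exactly the data required by Lemma \ref{collection}, invoke that lemma to produce an exceptional sequence of four line bundles, and then note that this is of maximal length: as recalled in the introduction, both the rank of the Grothendieck group and the total dimension of $H^*(S,\C)$ equal $4$, so a length-$4$ exceptional sequence cannot be extended.

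First I would take $\L$ to be the $G$-invariant theta characteristic on $C_1$ with no sections. Because $\L$ is a theta characteristic, $K_{C_1} \otimes \L^{-1} \cong \L$, so Serre duality gives $h^1(C_1,\L) = h^0(C_1, K_{C_1}\otimes\L^{-1}) = h^0(C_1,\L) = 0$; hence $\L$ is acyclic in addition to being $G$-invariant. Next I would take $\M$ to be the $G$-invariant line bundle on $C_2$ with $\eta_\L \cdot \eta_\M = 0$, whose existence is guaranteed by Dolgachev's surjectivity statement (Proposition \ref{dolgachevproposition}). Finally I would take $\N$ to be the $G$-equivariant theta characteristic on $C_2$ with no sections; the identical Serre-duality argument shows $\N$ is acyclic, and it carries a $G$-equivariant structure by construction. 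With these choices every hypothesis of Lemma \ref{collection} is satisfied, so for any characters $\chi_i$ the sequence
$$
\L \boxtimes (\M \otimes \N)(\chi_1),\; \L \boxtimes \M(\chi_2),\; \O \boxtimes \N(\chi_3),\; \O
$$
is an exceptional collection of length $4$ on $S$, which completes the proof.

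The genuinely delicate content is not this final assembly but the preparatory vanishing conditions packaged into the three lemmas. The obstruction class of the external product satisfies $\eta_{\L\boxtimes\M} = \eta_\L \cdot \eta_\M$ in $H^2(G,\C^\times)$, so the equation $\eta_\L \cdot \eta_\M = 0$ is precisely what upgrades $\L\boxtimes\M$ to a genuine $G$-equivariant line bundle on $C_1 \times C_2$, compensating for the fact that $\L$ by itself admits no equivariant structure; this is the mechanism described in the introduction. The second subtle point is acyclicity of the building blocks: it is exactly the vanishing of the cohomology of $\L$, $\N$, and their products that forces the off-diagonal $Hom$-groups of the collection to vanish, via the Künneth formula on $C_1 \times C_2$ and passage to $G$-invariants. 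Thus the main obstacle lies upstream, in exhibiting theta characteristics with no sections on these explicit curves, which rested on the concrete ramification data and Beauville's splitting formula (Lemma \ref{beauvillelemma}); once those are in hand, the theorem follows formally.
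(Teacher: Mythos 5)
Your proposal is correct and follows exactly the paper's route: the paper derives this theorem by feeding the three lemmas of Section \ref{sectiond4} (the sectionless $G$-invariant theta characteristic $\L$ on $C_1$, the compensating $\M$ on $C_2$ from Proposition \ref{dolgachevproposition}, and the $G$-equivariant acyclic theta characteristic $\N$ on $C_2$) directly into Lemma \ref{collection}. Your added observations --- that Serre duality upgrades ``no sections'' to acyclicity for a theta characteristic, and that length $4$ is maximal because the Grothendieck group has rank $4$ --- are correct routine details the paper leaves implicit.
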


\section{Case $G = S_4 \times \Z/2$}

The covering $C_1 \to \P^1$ has ramification type $(2,4,6)$. The corresonding tuple is $((12), 0), ((1234), 1), ((432), 1)$. The covering $C_2 \to \P^1$ has ramification type $(2^6)$ and the tuple is $((12)(34), 1), ((12), 1), ((34), 1), ((14)(23), 1), ((23), 1), ((14), 1)$. 

\begin{lemma}\label{bundle}
There is a $G$-invariant acyclic theta characteristic $\L$ on $C_1$.
\end{lemma}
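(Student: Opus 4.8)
The plan is to realize $C_1$ as a hyperelliptic curve and then run the same argument as the first lemma of Section~\ref{sectiond4}. The first step is to locate the hyperelliptic involution. Since $g_3 = ((432),1)$ satisfies $g_3^3 = (e,1)$, the central involution $t = (e,1)$ lies in the stabilizer $\la g_3\ra$ of the fibre $E_3$, so $t$ fixes exactly the points of $E_3$. A Riemann--Hurwitz computation for $C_1/\la t\ra \to \P^1$ (Galois group $S_4 = G/\la t\ra$, monodromy $(12),(1234),(432)$ of orders $2,4,3$) gives genus $0$. Hence $\pi\colon C_1 \to C_1/\la t\ra \cong \P^1$ is a hyperelliptic structure whose $2g+2 = 8$ Weierstrass points are precisely the points of $E_3$.

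Next I would produce a $G$-invariant theta characteristic $\L = \O(B_1 - B_2)$. For $B_2$ I take a fibre of the hyperelliptic map, i.e. the $g^1_2$, whose class is $G$-invariant. For $B_1$ I use the normal Klein four subgroup $V_4 \times \{0\} \lhd G$: a Riemann--Hurwitz computation for $C_1/(V_4 \times \{0\}) \to \P^1$ (Galois group $G/(V_4\times\{0\}) \cong S_3 \times \Z/2$, monodromy of orders $2,2,6$) again gives genus $0$, and since $(V_4\times\{0\}) \cap \la g_3\ra = \{e\}$ the group acts freely on $E_3$, splitting it into two orbits of four points. Each such orbit $B_1$ is then an unramified fibre of the degree-$4$ map $\rho\colon C_1 \to C_1/(V_4\times\{0\}) \cong \P^1$, so $[B_1] = \rho^*\O_{\P^1}(1)$ is $G$-invariant. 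Thus $\L = \O(B_1 - B_2)$ has degree $g-1 = 2$ and $G$-invariant class.

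It then remains to check that $\L$ is a theta characteristic and is acyclic. Because the curve is hyperelliptic we have $K_{C_1} \sim 2B_2$, and $2w \sim B_2$ for every Weierstrass point $w \in B_1$, whence $2B_1 \sim 4B_2$ and $2\L \sim 2B_1 - 2B_2 \sim 2B_2 \sim K_{C_1}$. Finally, writing $\L = \pi^*(\O_{\P^1}(-1))(B_1)$ with $B_1 \subset R = E_3$, Lemma~\ref{beauvillelemma} applied to $\pi$ yields $h^0(C_1,\L) = h^0(\P^1,\O(-1)) + h^1(\P^1,\O(-1)) = 0$; since $\L$ is a theta characteristic, $h^1(C_1,\L) = h^0(C_1,\L) = 0$ as well, so $\L$ is acyclic.

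The main obstacle is the middle step, namely arranging $G$-invariance of the ``half'' $B_1$ of the Weierstrass divisor. The construction succeeds precisely because one can find a normal subgroup with rational quotient on which $E_3$ breaks up into unramified fibres, which is what forces $[B_1] = \rho^*\O_{\P^1}(1)$ and hence the $G$-invariance. Identifying this subgroup and verifying that the relevant quotients $C_1/\la t\ra$ and $C_1/(V_4\times\{0\})$ are both rational is the real content; once that is in place, the theta-characteristic identity and the vanishing via Beauville's Lemma~\ref{beauvillelemma} are routine.
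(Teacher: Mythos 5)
Your proposal is correct and follows essentially the same route as the paper: realize $C_1$ as hyperelliptic via the central involution $(\mathrm{id},1)$, take $\L = \O(W - B_2)$ with $W$ one of the two free $V_4\times\{0\}$-orbits in $E_3$ and $B_2$ a fibre of the $g^1_2$, check the theta-characteristic identity from $E_3 \sim 2W$, and conclude acyclicity from Lemma~\ref{beauvillelemma} applied to the hyperelliptic involution. The only difference is that you spell out details the paper leaves implicit (locating the involution inside $\la g_3\ra$ and the Riemann--Hurwitz genus computations), all of which check out.
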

\begin{proof}
Note that the curve $C_1$ is hyperelliptic, the quotient of $C_1$ by the action of the element $(1,1)$ of order two in $G$ is isomorphic to $\P^1$. We denote by $\pi: C_1 \to \P^1$ the quotient morphism. Consider the Klein subgroup $V_4 = V_4 \times \{ 0 \} \le S_4 \times \Z_2$. The quotient $C_1 / V_4$ has genus 0. The group $V_4$ acts freely on $E_3$, so $E_3$ consists of two free $V_4$-orbits, which are equivalent to each other. Let $W$ be one of them. Then $\O(W)$ is a $G$-invariant line bundle. Let $\L = \pi^*\O(-1) \otimes \O(W)$. Note that the ramification points of $\pi$ are precisely $E_3$. Looking at the morphism $\pi$ we see that $\pi^*\O(-2)(E_3)$ is a canonical bundle on $C_1$, but $E_3 \sim 2W$, so $\L$ is a theta characteristic on $C_1$. It is $G$-invariant since $G$ must preserve the hyperelliptic structure. From Lemma \ref{beauvillelemma} applied to the hyperelliptic involution we get $H^0(C_1,\L) = H^0(\P^1, \O(-1)) \oplus H^1(\P^1, \O(-1)) = 0$.
\end{proof}

The next lemma follows from proposition \ref{dolgachevproposition}.

\begin{lemma}
There is a $G$-invariant line bundle $\M$ on $C_2$ such that $\eta_\L \cdot \eta_\M = 0$.
\end{lemma}

Then we construct an explicit $G$-equivariant acyclic theta characteristic on $C_2$.

\begin{lemma}
There is a $G$-equivariant acyclic theta characteristic $\N$ on $C_2$.
\end{lemma}
\begin{proof}
Note that the elements $((12)(34), 1)$, $((13)(24), 1)$, $((14)(23), 1)$ generate the subgroup $V_4 \times \Z_2$ isomorphic to $\Z_2^3$. Consider the system of quotients $C_2 \to C_2' \to C_2'' \to C_2'''$, where $C_2'$ is the quotient by the action of $((12)(34), 1)$, $C_2''$ by the induced action of $((13)(24), 1)$ and $C_2'''$ by $((14)(23), 1)$. We divide $E_1$ into three parts $F_1, F_2, F_3$ according to their stabilizers $((12)(34), 1)$, $((13)(24), 1)$ or $((14)(23), 1)$ and analogously $E_4$ into $F_4, F_5, F_6$. The ramification points of the morphism $C_2 \to C_2'$ are $F_1 \cup F_4$, the ramification of $C_2' \to C_2''$ is $F_2' \cup F_5'$, where $F_i'$ is the image of $F_i$ on $C_2'$ (without multiplicities) and the ramification of $C_2'' \to C_2'''$ is $F_3'' \cup F_6''$. The curve $C_2'''$ is elliptic, so its canonical divisor $K_{C_2'''}$ is zero. Thus we see that $K_{C_2''} = F_3'' + F_6''$, $K_{C_2'} = F_2' + F_3' + F_5' + F_6'$ and $K_{C_2} = F_1 + \dots + F_6 = E_1 + E_4$.

The element $((14), 1)$ stabilizes the subgroup generated by $((12)(34), 1)$, $((13)(24), 1)$. Therefore it acts on the curve $C_2''$. The quotient by this action is $\P^1$ and $F_3'', F_6''$ are its free orbits. Therefore $F_3'' \sim F_6''$ and $F_3' \sim F_6'$, $F_3 \sim F_6$. Analogously $F_2' \sim F_5'$ and so on. 

Let $N = E_1 + E_2 - E_3$, $N' = F_2' + F_3' + E_2' - E_3'$, $N'' = F_3'' + E_2'' - E_3''$. From the natural map $C_2''' \to C_2 / G$ we get $2E_2''' \sim 2E_3'''$. Therefore $N, N', N''$ are theta characteristics on the corresponding curves. From the Beauville's lemma applied repeatedly we get $h^0(C_2, N) = 2h^0(C_2', N') = 4h^0(C_2'', N'')$.

Now consider again the quotient $C_2'' \to \P^1$ by the action of $((14), 1)$. Applying Beauville's lemma to it we find $h^0(C_2'', N'') = 2h^0(\P^1, \O(-1)) = 0$.
\end{proof}

From the above lemmas we get the following theorem.

\begin{theorem}
Let $S=(C_1 \times C_2)/G$ be a surface isogenous to a higher product with $p_g=q=0$ and $G = S_4 \times \Z/2$. Then there are exceptional sequences of line bundles of maximal length 4 on $S$.
\end{theorem}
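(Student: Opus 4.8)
The plan is to assemble the three lemmas of this section into the general template provided by Lemma~\ref{collection}. First I would set $\L$ to be the $G$-invariant acyclic theta characteristic on $C_1$ furnished by Lemma~\ref{bundle}, take $\M$ to be the $G$-invariant line bundle on $C_2$ with $\eta_\L \cdot \eta_\M = 0$ provided by the subsequent lemma (whose existence rests on Proposition~\ref{dolgachevproposition}), and let $\N$ be the $G$-equivariant acyclic theta characteristic on $C_2$ constructed explicitly in the last lemma. These objects are precisely the data required by the hypotheses of Lemma~\ref{collection}: $\L$ is $G$-invariant and acyclic, $\M$ is $G$-invariant with obstruction inverse to that of $\L$, and $\N$ is acyclic and genuinely $G$-equivariant.

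Second, I would invoke Lemma~\ref{collection} with this choice. Since $\eta_\L \cdot \eta_\M = 0$, the box product $\L \boxtimes \M$ acquires a $G$-equivariant structure on $C_1 \times C_2$, and likewise $\L \boxtimes (\M \otimes \N)$; the objects $\O \boxtimes \N$ and $\O$ are equivariant because $\N$ and $\O$ already are. Lemma~\ref{collection} then asserts that
$$
\L \boxtimes (\M \otimes \N)(\chi_1),\; \L \boxtimes \M(\chi_2),\; \O \boxtimes \N(\chi_3),\; \O
$$
is an exceptional collection on $S$ for arbitrary twisting characters $\chi_i$. Passing through the equivalence $\text{coh}(S) \cong \text{coh}^G(C_1 \times C_2)$ via $p^G_\ast$, this is an exceptional sequence of four line bundles on $S$.

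Finally I would confirm maximality. As recalled in the introduction, for every surface isogenous to a higher product with $p_g = q = 0$ the rank of the Grothendieck group and the total dimension of $H^*(S,\C)$ both equal $4$, so no exceptional sequence can have length greater than $4$. Hence the collection above realizes the maximal length, which is exactly the assertion of the theorem.

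I expect no genuine obstacle to remain at this stage: all the difficulty has already been absorbed into the three lemmas, where the real work was the simultaneous vanishing of cohomology (achieved by repeated application of Beauville's Lemma~\ref{beauvillelemma} to suitable towers of quotient curves $C_2 \to C_2' \to C_2'' \to C_2'''$) and the matching of the obstruction classes $\eta_\L$ and $\eta_\M$ in $H^2(G,\C^\times)$. The theorem itself is therefore a formal corollary of Lemma~\ref{collection}; the only point worth noting is that the freedom to twist by the characters $\chi_i$, while not needed to establish existence here, is what the lemma builds into the construction.
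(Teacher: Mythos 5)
Your proposal is correct and follows exactly the paper's route: the paper derives this theorem by combining the three lemmas of the section ($\L$ on $C_1$, $\M$ with inverse obstruction, and the equivariant acyclic $\N$ on $C_2$) with the general Lemma~\ref{collection}, which is precisely what you do. The only difference is that you spell out the assembly and the maximality bound explicitly, whereas the paper leaves this implicit.
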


\section{Case $G = S_4$}

The covering $C_1 \to \P^1$ has ramification type $(3,4^2)$. The corresponding tuple is $((123), (1234), (1243))$. The covering $C_2 \to \P^1$ has ramification type $(2^6)$ and the tuple is $((12), (12), (23), (23), (34), (34))$. The following lemma was stated in \cite{Pardini}. We give a proof as follows.

\begin{lemma} Let us denote by $C_1'$ the curve $C_1$ from the previous section with its $S_4 \times \Z_2$-action. Consider the subgroup $S_4$ embedded into $S_4 \times \Z_2$ by the mapping $(id, sign)$. We claim that the curve $C_1'$ with the action of this subgroup is isomorphic to $C_1$ with its $S_4$-action.
\end{lemma}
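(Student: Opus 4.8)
The plan is to realize both $C_1$ (with its $S_4$-action) and $C_1'$ (with the action of the twisted subgroup $H = \{(g,\operatorname{sign}(g)) : g \in S_4\} \cong S_4$) as connected $S_4$-Galois covers of $\P^1$, and to identify them by comparing their branch data via Riemann's existence theorem. Recall that a connected $S_4$-Galois cover of $\P^1$ is determined, up to $S_4$-equivariant isomorphism, by its branch points together with its monodromy tuple $(g_1,\dots,g_n)$, taken modulo simultaneous conjugation and the braid (Hurwitz) action; since the three branch points of a cover with $n=3$ can be moved to any prescribed triple by an automorphism of $\P^1$, it suffices to match the monodromy tuples up to Hurwitz moves and conjugation.

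First I would show $C_1'/H \cong \P^1$. The subgroup $H$ is the kernel of the homomorphism $\epsilon : S_4 \times \Z/2 \to \Z/2$, $(g,a) \mapsto a + \operatorname{sign}(g)$, so $C_1'/H \to C_1'/(S_4 \times \Z/2) = \P^1$ is the double cover associated with $\epsilon$, branched exactly over those $q_i$ with $\epsilon(h_i) \neq 0$. From the tuple $(h_1,h_2,h_3) = (((12),0),\,((1234),1),\,((432),1))$ one finds $\epsilon(h_1)=1$, $\epsilon(h_2)=0$, $\epsilon(h_3)=1$, so the cover is branched at two points and Riemann--Hurwitz gives genus $0$. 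Hence $D := C_1'/H \cong \P^1$.

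Next I would compute the branch data of the $S_4$-cover $C_1' \to D$, where over each $q_i$ the local $H$-monodromy is a generator of the cyclic group $\langle h_i \rangle \cap H$. Over $q_1$ this intersection is trivial (as $((12),0) \notin H$), so $C_1' \to D$ is unramified there; over $q_2$ one has $h_2 \in H$ of order $4$, contributing two points of $D$ with $4$-cycle monodromy; over $q_3$ one has $h_3 \notin H$ but $h_3^2 = ((234),0) \in H$ of order $3$, contributing one point with $3$-cycle monodromy. Lifting the standard loops through the double cover $D \to \P^1$ (a Reidemeister--Schreier computation with transversal $\{1,\gamma_1\}$) then yields, under $H \cong S_4$, the explicit monodromy tuple $((1234),(1342),(134))$ for $C_1' \to D$. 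This is to be compared with the tuple $((123),(1234),(1243))$ of $C_1 \to \P^1$: both generate $S_4$, have product $1$, and realize the conjugacy classes $\{[\text{4-cycle}],[\text{4-cycle}],[\text{3-cycle}]\}$.

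Finally I would exhibit an explicit chain of Hurwitz moves and a conjugation carrying one tuple to the other: two braid moves bring $((123),(1234),(1243))$ first to $((1423),(123),(1243))$ and then to $((1423),(1234),(123))$, after which conjugation by $(234)$ produces $((1234),(1342),(134))$. By Riemann's existence theorem the two $S_4$-covers are therefore $S_4$-equivariantly isomorphic, which is exactly the assertion. The main obstacle is the third step: correctly computing the $H$-monodromy of the intermediate cover $C_1' \to D$, since this requires tracking how the loops around the $q_i$ lift through the double cover $D \to \P^1$ and produce the stated conjugates. Once that tuple is in hand, the Hurwitz equivalence of the final step is a short finite verification.
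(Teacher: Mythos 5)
Your proposal is correct and takes essentially the same route as the paper: both identify the quotient of $C_1'$ by the twisted subgroup with $\P^1$, compute the monodromy tuple of the intermediate $S_4$-cover by lifting the standard loops through the double cover $z \mapsto z^2$ (done pictorially in the paper, via Reidemeister--Schreier in your write-up), and obtain the same tuple $((1234),(1342),(134))$ up to cyclic reordering. The only cosmetic difference is in the final matching step, where the paper conjugates the reordered tuple by $(13)(24)$ while you use two braid moves followed by conjugation by $(234)$; both verifications check out.
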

\begin{proof}
Consider the quotient $C_1' \to C_1'/S_4$ and its ramification. There is a map $C_1'/S_4 \to C_1'/(S_4 \times \Z_2)$ which is a twofold covering of $\P^1$ by $\P^1$ and we want to choose coordinates in such a way that the mapping will be given by $z \mapsto z^2$. Suppose that the covering $C_1' \to C_1'/(S_4 \times \Z_2) \cong \P^1$ is ramified over points $0,1,\infty$ and the loop around $0$ corresponds to the action of $((12), 0)$ on the covering and the loops around $1$ and $\infty$ correspond to $((1234), 1)$, $((432), 1)$. Choose $-1$ as a base point on the quotient. The covering $C_1'$ is specified by the map
$$
\pi_1(\P^1 \setminus \{ 0, 1, \infty \}, -1) = \la t_0, t_1, t_\infty \mid t_0t_1t_\infty = 1 \ra \to \Z_2 \times S_4,
$$
given by
$$
t_0 \mapsto ((12), 0), \; t_1 \mapsto ((1234), 1), \; t_\infty \mapsto ((432), 1).
$$

There are four points $0, 1, -1, \infty$ on $C_1'/S_4$ above $0,1,\infty$ on $C_1'/(S_4 \times \Z_2)$. We want to compute the composite map
$$
\pi_1(\P^1 \setminus \{ 0, \pm 1, \infty \}, i) \to \pi_1(\P^1 \setminus \{ 0, 1, \infty \}, -1) \to S_4 \times \Z_2,
$$
check that its image lies in $S_4 \le S_4 \times \Z_2$, that loop around one of the points maps to the trivial element, so the ramification is actually in the three points, and finally check that the map gives our covering $C_1$. 

If we choose the generators of
$$
\pi_1(\P^1 \setminus \{ 0, \pm 1, \infty \}, i) = \la s_1,s_{-1},s_0,s_\infty \mid s_1 s_{-1} s_0 s_\infty = 1 \ra
$$
as follows
$$
\xy
<1cm,0cm>:,
(0.5,0)*{i}, (-3.5,-2.25)*{-1}, (-0.5,-2.25)*{0}, (3,-2.25)*{1}, (3,0.5)*{\infty},
(-3,-3)*=0{\bullet}, {\ellipse(0.5)u:a(60),u:a(30){-}},
(0,-3)*=0{\bullet}, {\ellipse(0.5)u:a(105),u:a(75){-}},
(3,-3)*=0{\bullet}, {\ellipse(0.5)u:a(150),u:a(120){-}},
0="z";(-3,-3)+/r:a(30).5cm/, **@{-},
0;(-3,-3)+/r:a(60).5cm/, **@{-},
0;(0,-3)+/r:a(75).5cm/, **@{-},
0;(0,-3)+/r:a(105).5cm/, **@{-},
0;(3,-3)+/r:a(120).5cm/, **@{-},
0;(3,-3)+/r:a(150).5cm/, **@{-},
0;(1.7,-4.3),**@{-},
0;(2.2,-4.2),**@{-},
(0,-2), {\ellipse(4.5,2.5)u:a(-72.5),u:a(-77.5){-}},
(-3,-3.5)*r\dir{>},
(0,-3.5)*r\dir{>},
(3,-3.5)*r\dir{>},
(0,-4.5)*r\dir{<},
(0,-4.75)*@{}, (-4.75,0)*@{}
\endxy
$$
(these are loops around $1, -1, 0, \infty$) and generators $t_0, t_1, t_\infty$ as follows,
$$
\xy
<1cm,0cm>:,
(-3.25,0.5)*{-1}, (0.6,-0.6)*{0}, (2.4,-0.6)*{1}, (3.75,1.75)*{\infty},
0*=0{\bullet}, {\ellipse(0.5)u:a(-15),_,u:a(15){-}}, (0.5,0)*u\dir{>},
(3,0)*=0{\bullet}, {\ellipse(0.5)u:a(-60),_,u:a(-30){-}}, (3.5,0)*u\dir{>},
(-3,0);/r:a(165).5cm/,**@{-},
(-3,0);/r:a(195).5cm/,**@{-},
(-3,0);(3,0)+/ul:a(-15).5cm/, **\crv{(0,2)},
(-3,0);(3,0)+/ul:a(15).5cm/, **\crv{(0,1.5)},
(0.25,0.25), {\ellipse(4.25,1.5)u:a(-95),u:a(-100){-}},
(0.25,-1.25)*r\dir{<},
(-3,0);(-0.8,-1.2), **@{-},
(-3,0);(-1.75,-1.075), **@{-},
(0,2)*@{}, (0,-2)*@{}, (-4.25,0)*@{}
\endxy
$$
then we can just draw the images of $s_1, \dots, s_\infty$ under the map $z \mapsto z^2$ and then write them as combinations of $t_0, t_1$. We obtain that the homomorpism 
$$
\pi_1(\P^1 \setminus \{ 0, \pm 1, \infty \}, i) \to \pi_1(\P^1 \setminus \{ 0, 1, \infty \}, -1)
$$
is given by
$$
s_1 \mapsto t_1,\; s_{-1} \mapsto t_0t_1t_0^{-1},\; s_0 \mapsto t_0^2,\; s_\infty \mapsto t_0^{-1}t_1^{-1}t_0^{-1}t_1^{-1}.
$$
So the composition
$$
\pi_1(\P^1 \setminus \{ 0, \pm 1, \infty \}, i) \to \pi_1(\P^1 \setminus \{ 0, 1, \infty \}, -1) \to S_4 \times \Z_2
$$
is given by
$$
s_1 \mapsto ((1234), 1),\; s_{-1} \mapsto ((1342), 1),\; s_0 \mapsto (1, 0),\; s_\infty \mapsto ((134), 0).
$$
We see that it factors through $S_4 \le S_4 \times \Z_2$ and that $s_0$ is mapped to trivial element, so $C_1' \to C_1'/S_4$ is ramified only over $\pm 1, \infty$. The group $\pi_1(\P^1 \setminus \{ \pm 1, \infty\})$ can be represented as
$$
\la s_\infty, s_1, s_{-1} \mid s_\infty s_1 s_{-1} = 1 \ra
$$
and these three generators map to $(134), (1234), (1342)$. If we conjugate this sequence by $(13)(24)$, we get $(123),(1234),(1243)$. Therefore, the covering $C_1' \to C_1'/S_4$ is isomorphic to $C_1 \to C_1/S_4$ with $S_4$-action.
\end{proof}

\begin{lemma}
There is a $G$-invariant acyclic theta characteristic $\L$ on $C_1$.
\end{lemma}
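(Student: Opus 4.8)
The plan is to obtain this lemma essentially for free from the two preceding results, so that no new geometric input is needed. By the previous lemma the curve $C_1$, equipped with its $S_4$-action, is isomorphic as an $S_4$-curve to the curve $C_1'$ carrying the action of the subgroup $S_4 \hookrightarrow S_4 \times \Z/2$, $\sigma \mapsto (\sigma, \operatorname{sign}(\sigma))$. It therefore suffices to exhibit an $S_4$-invariant acyclic theta characteristic on $C_1'$ and transport it along this isomorphism.

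For this I would simply reuse the line bundle $\L = \pi^*\O(-1) \otimes \O(W)$ built in Lemma \ref{bundle}, where $\pi \colon C_1' \to \P^1$ is the hyperelliptic quotient by the central involution $(1,1)$ and $W$ is one of the two free $V_4$-orbits in $E_3$. In Lemma \ref{bundle} it was already checked that $\L^{\otimes 2} \cong K_{C_1'}$, that $\L$ is invariant under the full group $S_4 \times \Z/2$, and that $H^0(C_1', \L) = 0$; acyclicity then follows since $H^1(C_1', \L) \cong H^0(C_1', \L)^\ast$ by Serre duality together with the theta-characteristic identity.

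The one remaining observation is purely formal: a class in $Pic(C_1')$ fixed by all of $S_4 \times \Z/2$ is a fortiori fixed by the subgroup $S_4$, while the properties of being a theta characteristic and of being acyclic make no reference to the group action at all. Note that the central involution $(1,1)$ itself does not lie in the embedded $S_4$, so one cannot invoke Beauville's lemma through an element of $S_4$ directly; but this is irrelevant, because the needed properties of $\L$ had already been established over the larger group. Consequently I expect no genuine obstacle here — the delicate step was carried out in the previous lemma, where the two coverings were identified through the fundamental-group computation under $z \mapsto z^2$.
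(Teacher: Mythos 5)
Your proposal is correct and follows the paper's proof exactly: the paper simply says "we can use the same $\L$ as in Lemma \ref{bundle}," relying on the preceding identification of $C_1$ with its $S_4$-action as $C_1'$ with the action of the embedded subgroup $S_4 \le S_4 \times \Z/2$. Your additional remarks — that invariance under the larger group restricts to invariance under the subgroup, and that the central involution $(1,1)$ not lying in the embedded $S_4$ causes no trouble since acyclicity and the theta-characteristic property were already verified on $C_1'$ — are exactly the (unstated) justifications behind the paper's one-line proof.
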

\begin{proof}
Indeed, we can use the same $\L$ as in Lemma \ref{bundle}.
\end{proof}

The proposition \ref{dolgachevproposition} implies the next lemma.

\begin{lemma}
There is a $G$-invariant line bundle $\M$ on $C_2$ such that $\eta_\L \cdot \eta_\M = 0$.
\end{lemma}

Then we prove that there is a $G$-equivariant acyclic theta characteristic on $C_2$.

\begin{lemma}
There is a $G$-equivariant acyclic theta characteristic $\N$ on $C_2$.
\end{lemma}
\begin{proof}
We let $N = E_i + E_j - E_k$ for $i,j,k$ all different and $\N = \O(N)$. We will prove that for some choices of $i,j,k$ such $\N$ is acyclic, but we can't say for which ones precisely. 

Note that $N$ has degree 12 so we only have to check that $N$ has no regular sections. The subgroup $A_4$ acts freely on $C_2$, the quotient $C_2/A_4$ has genus 2, the morphism $C_2/A_4 \to C_2/S_4$ has 6 ramification points $E'_1, \dots, E_6'$, where $E_i'$ is the image of $E_i$ on the quotient $C_2/A_4$. We have $E_1 + E_2 + E_3 \sim E_4 + E_5 + E_6$ and the $S_4$ acts by a sign character on the function with divisor $E_1 + E_2 + E_3 - E_4 - E_5 - E_6$. All relations between divisors $E_i$ follow form this one and $2E_i \sim 2E_j$ since $G$ must act via character on a function giving such relation. Now it is not hard to see that possible choices of $i,j,k$ give 10 different classes in $Pic(C_2)$. It follows from these relations that $\N$ is a theta characteristic. \\ \\
Step 1. There are no 1-dimensional subrepresentations in $H^0(C_2, N)$. \\
The subgroup $A_4$ must act trivially on such a subrepresentation, so if it exists then we must have $E_i + E_j \sim E_k + E_l$. This never holds when $i,j,k$ are different. \\ \\
Step 2. The dimension of $H^0(C_2, N)$ is 0, 2 or 4. \\
We apply the Beauville's lemma to the quotient $C_2 \to C_2 / (12)$. We have $h^0(C_2, N) = h^0(A) + h^1(A) = 2h^0(A)$ where $A$ is a line bundle of degree 3 on the quotient $C_2 / (12)$. By Clifford's theorem $h^0(A) \le 2$. \\ \\
Step 3. There are $i,j,k$ such that $N$ has no regular sections. \\
From previous steps we know that $H^0(C_2, N)$ is a sum of 2-dimensional irreducible representations of $S_4$. Let $W$ be one of them. Then $W$ comes from an irreducible representation of $S_3$ via the map $S_4 \to S_4/V_4 \cong S_3$. There is a basis $\phi_1, \phi_2$ of $W$ such that $A_3 \le S_3$ acts on each $\phi_i$ by a character, and elements not in $A_3$ exchange $\phi_1$ and $\phi_2$, also multiplying them by some constant. We have $E_1 + E_2 - E_3 + (\phi_1) = F_1$ and $E_1 + E_2 - E_3 + (\phi_2) = F_2$ where $F_1$ and $F_2$ are $A_4$-invariant divisors of degree 12. Elements not in $A_4$ exchange $F_1$ and $F_2$. We denote by $E_i'$ and $F_i'$ images of $E_i$ and $F_i$ on the quotient $C_2/A_4$ (without multiplicities). Then $E_i'$ and $F_i'$ are single points on $C_2/A_4$. Note that $F_1', F_2'$ are not equal to any $E_i'$. We have $\sigma(F_1') = F_2'$, where $\sigma$ is the nontrivial automorphism of the covering $C_2/A_4 \to C_2/S_4$.

We want to prove that there no more than 2 different $N$'s of the form $E_i + E_j - E_k$ which have sections. Since the morphism $C_2 \to C_2/A_4$ is unramified, from the diagram \ref{diagram} we see that the kernel $X$ of the map $Pic(C_2/A_4) \to Pic(C_2)$ is isomorphic to $H^1(A_4, \C^\times) \cong \Z/3$. Suppose that $N(i,j,k) = E_i + E_j - E_k$ and different $N(l,m,n)$ both have sections and we have $N(i,j,k) \sim F_1 \sim F_2$, $N(l,m,n) \sim F_3 \sim F_4$, where $\sigma(F_1) = F_2$, $\sigma(F_3) = F_4$, each $F_i$ is a pullback of a point on $C_2/A_4$. Note that $F_1'$ is not equal to $F_3'$ or $F_4'$, as otherwise we would have $N(i,j,k) \sim N(l,m,n)$. Divisors $F_1' - F_2'$ and $F_3' - F_4'$ are nontrivial elements of $X$. Since $X \cong \Z/3$, we have either $F_1' - F_2' \sim F_3' - F_4'$ or $2(F_1' - F_2') \sim F_3' - F_4'$. The first variant is impossible, because then the divisor $F_1' + F_4'$ has at least 2 dimensional sections, but such a divisor is unique, so it must be equivalent to $F_1' + F_2'$, which is not true. There can't be a third pair $F_5', F_6'$ because then $F_5' - F_6'$ must be equivalent to one of $F_1' - F_2', F_3' - F_4'$. Therefore, there are not more than 2 $N$'s which have sections, so acyclic $N$ exists.
\end{proof}

From the above lemmas we get the following theorem.

\begin{theorem}
Let $S=(C_1 \times C_2)/G$ be a surface isogenous to a higher product with $p_g=q=0$ and $G = S_4$. Then there are exceptional sequences of line bundles of maximal length 4 on $S$.
\end{theorem}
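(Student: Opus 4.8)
The plan is to obtain the exceptional collection directly by feeding the three line bundles supplied by the preceding lemmas of this section into Lemma \ref{collection}. Concretely, I would take $\L$ to be the $G$-invariant acyclic theta characteristic on $C_1$ produced by the lemma immediately above (which reuses the bundle of Lemma \ref{bundle}, legitimately transported to the $S_4$-action via the identification $C_1' \cong C_1$ established earlier in the section); take $\M$ to be the $G$-invariant line bundle on $C_2$ with $\eta_\L \cdot \eta_\M = 0$ furnished by Dolgachev's Proposition \ref{dolgachevproposition}; and take $\N$ to be the $G$-equivariant acyclic theta characteristic on $C_2$ constructed in the final lemma. These are precisely the three inputs that Lemma \ref{collection} demands.

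Next I would check that the hypotheses of Lemma \ref{collection} hold verbatim: $\L$ is $G$-invariant and acyclic; $\M$ is $G$-invariant with $\eta_\L \cdot \eta_\M = 0$, so that by the obstruction computation \eqref{obstr} the external product $\L \boxtimes \M$ carries a $G$-equivariant structure on $C_1 \times C_2$; and $\N$ is acyclic and admits a $G$-equivariant structure. With these verified, Lemma \ref{collection} yields at once that
$$
\L \boxtimes (\M \otimes \N)(\chi_1),\; \L \boxtimes \M(\chi_2),\; \O \boxtimes \N(\chi_3),\; \O
$$
is an exceptional collection on $S$ for any choice of characters $\chi_i \in \widehat{G}$. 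In particular it has length $4$.

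Finally I would appeal to the numerical facts recalled in the introduction: the Grothendieck group of $S$ has rank $4$ and $\dim_\C H^*(S,\C) = 4$, so no exceptional sequence on $S$ can exceed length $4$. Hence the collection above realizes the maximal length, which is exactly the assertion of the theorem.

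Since all of the substantive content resides in the already-established lemmas, the only genuine difficulty lies upstream, in constructing the acyclic $G$-equivariant theta characteristic $\N$ on $C_2$ — specifically Step 3 of that lemma, where one bounds the number of divisors of the form $E_i + E_j - E_k$ possessing nonzero sections by exploiting $X \cong H^1(A_4,\C^\times) \cong \Z/3$. Granting that, the present theorem is a formal corollary: the assembly through Lemma \ref{collection} and the maximality comparison are both routine, so I do not expect any further obstacle at this stage.
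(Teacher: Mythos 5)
Your proposal matches the paper's intended argument exactly: the paper derives this theorem by citing "the above lemmas" of the section, whose conclusions are precisely the three hypotheses of Lemma \ref{collection}, and the maximality of length $4$ follows from the rank-$4$ Grothendieck group as recalled in the introduction. You have simply made explicit the assembly that the paper leaves implicit, so there is nothing to add.
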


\section{Case $G = (\Z/4 \times \Z/2) \rtimes \Z/2$}

The group $G$ has a presentation
$$
\la x, y, z \mid x^4 = y^2 = z^2 = [x,y] = [y,z] = 1, x^z = xy \ra.
$$
Both coverings $C_1,C_2 \to \P^1$ have ramification type $(2^2,4^2)$. The corresponding tuples are $(z,z,x,x^{-1})$ for $C_1$ and $(x^2yz, x^2yz, xyz, x^3z)$ for $C_2$. Both curves $C_1$ and $C_2$ have genus $5$ ($2g-2 = 8$). The reduced fibers $E_1$, $E_2$ consist of 8 points each and $E_3$, $E_4$ consist of 4 points on both curves. Now we construct explicit $G$-equivariant acyclic theta characteristics on $C_1$ and $C_2$.

\begin{lemma}
There is an acyclic $G$-equivariant theta characteristic $\L$ on $C_1$.
\end{lemma}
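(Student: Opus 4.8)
The plan is to write down an explicit theta characteristic on $C_1$ supported on the ramification fibers $E_1,\dots,E_4$, so that it is automatically $G$-equivariant, and then to prove acyclicity by descending through a tower of double covers and applying Beauville's Lemma~\ref{beauvillelemma} repeatedly. First I would extract the canonical class from the ramification data. Writing $F$ for the class of a general fiber of $\pi:C_1\to\P^1$ and using $\pi^\ast p_i = m_i E_i$, one gets $F \sim 2E_1 \sim 2E_2 \sim 4E_3 \sim 4E_4$ and $K_{C_1} \sim -2F + E_1 + E_2 + 3E_3 + 3E_4$, of degree $8$. A short computation in $\mathrm{Pic}(C_1)\otimes\Q$ then gives $K_{C_1} \sim F/2$, so $E_3$ (equivalently $E_4$), of degree $4$, differs from a theta characteristic only by a torsion class.

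The key point, and the reason this case is easier than the previous ones, is that each $E_i$ is a single $G$-orbit (a full fiber of $\pi$), hence is $G$-invariant as an honest divisor; consequently any integral combination $N=\sum a_i E_i$ is $G$-invariant as a divisor and $\O(N)$ carries a canonical $G$-equivariant structure. Thus no class in $H^2(G,\C^\times)$ can obstruct equivariance, and I only have to exhibit such an $N$ that is a genuine theta characteristic and is acyclic. To pin down $N$ I would produce the precise linear equivalences among the $E_i$ coming from the intermediate quotients by the normal subgroups $\la x^2\ra$, $\la x^2,y\ra$ and $\la x,y\ra$ (each central or of index two), exactly in the spirit of the $E_i \sim E_j$ relations used in the $S_4$ and $S_4\times\Z/2$ sections. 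These finer $2$-torsion relations are what allow a fiber-supported divisor to represent the (a priori only numerically determined) theta characteristic, with $2N \sim K_{C_1}$.

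For acyclicity I would iterate Lemma~\ref{beauvillelemma} along a tower of double covers, and here a genuinely new feature appears: a direct fixed-point count shows that \emph{no} involution in $G$ has quotient $\P^1$. The best case is the central involution $x^2$, which fixes precisely the eight points of $E_3\cup E_4$, so $B_1 := C_1/\la x^2\ra$ is an elliptic curve carrying the induced action of $\bar G = G/\la x^2\ra \cong D_4$; since $N$ is $x^2$-invariant, Beauville's lemma applies to $\pi:C_1\to B_1$. Because the tower must pass through an elliptic curve, the descended bundle $N'$ on $B_1$ is a $2$-torsion (theta) class, and the content of the argument is that it is the \emph{nontrivial} one. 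A further application of Lemma~\ref{beauvillelemma} to a suitable induced involution on $B_1$ with $\P^1$-quotient then yields $h^0(C_1,\O(N)) = c\cdot h^0(\P^1,\O(-1)) = 0$ for some positive integer $c$, exactly as in the $D_4\times\Z/2$ treatment of $C_2$, where the chain ran $C_2 \to (\text{genus }1) \to \P^1$. Finally $h^1 = 0$ follows from $\chi(\O(N)) = \deg N - g + 1 = 4 - 5 + 1 = 0$.

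I expect the main obstacle to be concentrated in the two middle steps. First, pinning down the exact torsion so that a $G$-invariant divisor supported on the $E_i$ really represents a theta characteristic: the naive degree count leaves a parity ambiguity that can be resolved only by using the finer equivalences from the intermediate quotients. Second, verifying that the degree-zero bundle $N'$ descended to the elliptic quotient $C_1/\la x^2\ra$ is genuinely nontrivial, since it is precisely this nonvanishing of the $2$-torsion class that forces $h^0=0$ at the elliptic stage of the Beauville tower; everything else is a routine Riemann--Roch bookkeeping.
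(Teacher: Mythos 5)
Your overall strategy coincides with the paper's: take a divisor supported on the reduced fibers $E_i$, so that $G$-equivariance of $\O(N)$ is automatic because each $E_i$ is an honest $G$-invariant divisor, and prove acyclicity by descending through an intermediate genus-one quotient via Lemma \ref{beauvillelemma}, precisely because no single involution in $G$ has a $\P^1$ quotient. Your supporting computations are correct: $E_1,E_2$ have degree $8$ and $E_3,E_4$ degree $4$, the central involution $x^2$ fixes exactly the eight points of $E_3\cup E_4$ so $C_1/\la x^2\ra$ is elliptic, and $\chi(\O(N))=0$ reduces everything to $h^0=0$.

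However, the two steps you defer to the end are the entire mathematical content of the lemma, and as written the argument is not complete. First, you never exhibit the theta characteristic. Note that your leading candidate $E_3$ (or $E_4$) is an effective divisor, so $\O(E_3)$ has a section and can never be acyclic; the torsion twist you allude to is not a technicality but is forced, and it must be produced explicitly. The paper takes $L=E_1-E_3$: the quotients $C_1/\la x\ra$ and $C_1/\la y,z\ra$ give the relations $E_1\sim E_2$ and $E_3\sim E_4$, whence $K_{C_1}\sim -2F+E_1+E_2+3E_3+3E_4\sim 2E_1-2E_3=2L$. Second, you do not verify that the class descended to the elliptic curve is the nontrivial $2$-torsion class --- equivalently, that the final descent to $\P^1$ lands on $\O(-1)$ rather than on the pair $\O$, $\O(-2)$ in the splitting of Lemma \ref{beauvillelemma}; you correctly identify this as the crux but leave it open, and without it the chain of Beauville reductions only bounds $h^0$ rather than computing it. The paper settles both points by a concrete computation with $L=E_1-E_3$ along the tower $C_1\to C_1/\la z\ra\to C_1/\la x^2,z\ra$ (your tower through $\la x^2\ra$ would serve equally well), obtaining $h^0(L)=2h^0(L')=4h^0(\P^1,\O(-1))=0$. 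In short: right plan and correct peripheral checks, but the proof of the two decisive claims is missing.
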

\begin{proof}
Let $L = E_1 - E_3$ and $\L = \O(L)$. Looking at the quotients $C_1 / \la x \ra, C_1 / \la y, z \ra$ we find $E_1 \sim E_2, E_3 \sim E_4$. It follows that $\L$ is a theta characteristic. Consider the quotient $C_1 \to C_1 / \la z \ra$. From Lemma \ref{beauvillelemma} we have $h^0(L) = h^0(L') + h^1(L')$ where $L'$ is a divisor of degree 0 on the curve $C_1 / \la z \ra$ of genus 1. One checks that $L'$ is again a theta characteristic. From the quotient $C_1 / \la z \ra \to C_1 / \la x^2, z \ra$ we find $h^0(L) = 2h^0(L') = 4h^0(\P^1, \O(-1)) = 0$.
\end{proof}

\begin{lemma}
There is an acyclic $G$-equivariant line bundle $\N$ on $C_2$.
\end{lemma}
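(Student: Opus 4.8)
The plan is to imitate the construction on $C_1$ from the preceding lemma, taking the candidate to be built from the reduced fibers of the Galois covering $\pi : C_2 \to \P^1$. First I would set $N = E_1 - E_3$ and $\N = \O(N)$, so that $\deg N = 8 - 4 = 4 = g_2 - 1$ and hence $\chi(\N) = 0$ by Riemann--Roch. The point that makes the $G(16)$ case easy is that $E_1$ and $E_3$ are reduced fibers of a Galois covering and therefore $G$-invariant divisors; consequently $\O(E_1)$ and $\O(E_3)$ carry canonical $G$-linearizations coming from the $G$-action on the sheaf of rational functions, and $\N = \O(E_1) \otimes \O(E_3)^{-1}$ inherits a $G$-equivariant structure. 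In particular there is no obstruction in $H^2(G,\C^\times)$ to handle here, in contrast with the $D_4 \times \Z/2$, $S_4$ and $S_4 \times \Z/2$ cases, and Proposition \ref{dolgachevproposition} is not needed.

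Next I would check that $\N$ is a theta characteristic. Riemann--Hurwitz for $\pi$ gives $K_{C_2} \sim -2\pi^*\O_{\P^1}(1) + E_1 + E_2 + 3E_3 + 3E_4$, together with $2E_1 \sim 2E_2 \sim 4E_3 \sim 4E_4 \sim \pi^*\O_{\P^1}(1)$ from the full fibers. To upgrade $2N \sim K_{C_2}$ from a numerical equivalence to an actual linear equivalence I need the sharper relations $E_1 \sim E_2$ and $E_3 \sim E_4$; these I would extract from suitable genus-$0$ quotients of $C_2$ on which the relevant pairs of branch points become linearly equivalent, exactly as $E_1 \sim E_2$, $E_3 \sim E_4$ were read off on $C_1$. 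Granting these, a short computation gives $2N \sim 2E_3 \sim K_{C_2}$, so $\N$ is a $G$-equivariant theta characteristic, and it only remains to prove $h^0(C_2, \N) = 0$, since then Serre duality forces $h^1 = 0$ as well.

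The main work, and the step I expect to be the real obstacle, is this acyclicity. The element $x^2 y = (xyz)^2$ is central of order $2$ and lies in the stabilizers $\la xyz\ra$ and $\la x^3 z\ra$ of the points of $E_3$ and $E_4$, so the double cover $C_2 \to C_2/\la x^2 y\ra$ is ramified exactly over the $8$ points of $E_3 \cup E_4$, and Riemann--Hurwitz then shows that $C_2' := C_2/\la x^2 y\ra$ is elliptic. Writing $E_1 = \pi^*E_1'$ and $-E_3 = E_3 - \pi^*E_3'$ I would present $N$ in the form $\pi^*(E_1' - E_3') + E_3$ with $E_3$ a subset of the ramification, so that Lemma \ref{beauvillelemma} applies and yields $h^0(C_2, N) = h^0(C_2', N') + h^1(C_2', N')$ for the theta characteristic $N' = E_1' - E_3'$ of degree $0$ on the elliptic curve $C_2'$. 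Finally I would choose an induced involution on $C_2'$ with fixed points, whose quotient is $\P^1$, and apply Lemma \ref{beauvillelemma} once more to get $h^0(C_2', N') = 2h^0(\P^1, \O(-1)) = 0$, just as in the $C_1$ argument. The delicate points are selecting the correct second involution and tracking precisely how $N$ and the ramification descend at each stage of the tower, which is where the bookkeeping with the presentation of $G(16)$ must be done carefully.
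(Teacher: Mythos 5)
Your route is genuinely different from the paper's. The paper does not redo any computation on $C_2$: it observes that the generating tuple $(x^2yz,\,x^2yz,\,xyz,\,x^3z)$ of $C_2$ is the image of the tuple $(z,z,x,x^{-1})$ of $C_1$ under the automorphism $\phi$ of $G$ sending $x \mapsto xyz$, $y \mapsto y$, $z \mapsto x^2yz$, so that $C_2$ with its $G$-action is isomorphic to $C_1$ with the action twisted by $\phi$; the bundle $\O(E_1 - E_3)$ on $C_2$ is then equivariant and acyclic because the previous lemma already proved this for the corresponding bundle on $C_1$. Your plan instead runs the Beauville-type tower directly on $C_2$. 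That is a legitimate alternative (it is essentially what one obtains by transporting the $C_1$ proof through the isomorphism), and your preliminary steps are sound: the canonical linearization of $\O(E_1 - E_3)$ coming from the $G$-invariance of the reduced fibers is correct, $x^2y = (xyz)^2 = (x^3z)^2$ is indeed central of order $2$ with fixed locus exactly $E_3 \cup E_4$, and the quotient is elliptic.

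The gap is at the decisive step. After the first application of Lemma \ref{beauvillelemma} you arrive at $h^0(C_2, N) = h^0(C_2', N') + h^1(C_2', N')$ with $N' = E_1' - E_3'$ a degree-zero theta characteristic on the elliptic curve $C_2'$, i.e.\ a $2$-torsion class. Acyclicity of $\N$ is then exactly the statement that $N' \not\sim 0$, and this is not automatic: a degree-zero theta characteristic on an elliptic curve has $h^0$ equal to $0$ or $1$ according to whether it is nontrivial or trivial, and both outcomes are consistent with everything you have established so far. Your proposal to "apply Lemma \ref{beauvillelemma} once more to get $2h^0(\P^1,\O(-1))$" presupposes that $N'$ descends with $\L' = \O_{\P^1}(-1)$ and $|E| = 2$ rather than with $\L' = \O_{\P^1}$ and $E = \emptyset$; distinguishing these two cases is precisely the monodromy bookkeeping you defer to the end, so the proof as written does not establish acyclicity. (The earlier deferred check that $E_1 \sim E_2$ and $E_3 \sim E_4$ on $C_2$ is by contrast routine, via the genus-zero quotients $C_2/\la xyz\ra$ and $C_2/\la y, x^2z\ra$.) If you want to avoid the remaining computation, the cleanest fix is the paper's: exhibit the automorphism $\phi$ and quote the $C_1$ lemma.
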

\begin{proof}
The curve $C_2$ is abstractly the same as $C_1$ (lies in the same family) but the action of $G$ on it is twisted by an automorphism. Namely, consider the automorphism $\phi: G \to G$ given by
\begin{align*} 
x & \mapsto xyz, \\ 
y & \mapsto y, \\
z & \mapsto x^2yz.
\end{align*}
Then the curve $C_2$ is one of the possible curves $C_1$ with the $G$-action given by $g \cdot x = \phi^{-1}(g)x$ where on the right hand side we consider the action on $C_1$. Thus if we let $\N = \O(E_1 - E_3)$ on $C_2$ then $\N$ has no regular sections and is equivariant.
\end{proof}

From the above lemmas we get the following theorem.

\begin{theorem}
Let $S=(C_1 \times C_2)/G$ be a surface isogenous to a higher product with $p_g=q=0$ and $G = (\Z/4 \times \Z/2) \rtimes \Z/2$. Then there are exceptional sequences of line bundles of maximal length 4 on $S$.
\end{theorem}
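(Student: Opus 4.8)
The plan is to obtain the exceptional sequence as an immediate application of Lemma \ref{collection}, taking advantage of the simplification announced in the introduction: since in this case both curves carry genuine $G$-equivariant acyclic line bundles, the auxiliary obstruction-cancelling bundle $\M$ of the general construction is no longer needed. First I would invoke the two lemmas of this section, which provide a $G$-equivariant acyclic theta characteristic $\L$ on $C_1$ and a $G$-equivariant acyclic line bundle $\N$ on $C_2$.

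The key point is that because $\L$ already admits a $G$-equivariant structure, its class is killed by the obstruction map $Pic(C_1)^G \to H^2(G,\C^\times)$ of \eqref{exseq}; that is, $\eta_\L = 0$. Consequently the hypothesis $\eta_\L \cdot \eta_\M = 0$ of Lemma \ref{collection} holds for \emph{any} $G$-invariant line bundle $\M$ on $C_2$, so I would simply take $\M = \O_{C_2}$, which is $G$-equivariant with $\eta_\M = 0$. Feeding $\L$, $\M = \O$ and $\N$ into Lemma \ref{collection} then yields the exceptional collection
$$
\L \boxtimes \N(\chi_1),\; \L \boxtimes \O(\chi_2),\; \O \boxtimes \N(\chi_3),\; \O
$$
of $G$-equivariant line bundles on $C_1 \times C_2$, whose images under $p^G_\ast$ are line bundles on $S$. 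This is a length-$4$ exceptional sequence, and since the Grothendieck group of $S$ has rank $4$ (as recalled in the introduction), $4$ is the maximal possible length.

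I do not expect any real obstacle to remain at this stage: the genuine work has already been carried out in the two preceding lemmas, where the acyclicity of the explicit half-canonical divisor $E_1 - E_3$ on $C_1$ and on $C_2$ was verified using Beauville's Lemma \ref{beauvillelemma} and, for $C_2$, transport of structure through the automorphism $\phi$. For the theorem itself the only thing worth confirming is that $\L$ and $\N$ carry the correct numerical types: both arise from a divisor of degree $4$ on a genus-$5$ curve, hence are numerical halves of the canonical classes ($2g-2 = 8$), so the four objects displayed above have numerical types $\O(1,1)$, $\O(1,0)$, $\O(0,1)$ and $\O(0,0)$, exactly as demanded by Lemma \ref{collection}. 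This completes the plan.
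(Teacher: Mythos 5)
Your proposal is correct and follows essentially the same route as the paper: the paper's proof of this theorem is exactly the observation that the two preceding lemmas supply $G$-equivariant acyclic theta characteristics on both curves, so Lemma \ref{collection} applies with $\M = \O$ (the obstruction condition being vacuous since $\eta_\L = 0$), yielding the collection of numerical types $\O(1,1), \O(1,0), \O(0,1), \O(0,0)$. Your explicit verification of the degrees and of why $\eta_\L$ vanishes is a faithful unpacking of what the paper leaves implicit.
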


\section{Appendix: Explicit construction in the case $G = D_4 \times \Z_2$}

In this section we give explicit construction of the line bundle $\M$ on the curve $C_2$ in the case $G = D_4 \times \Z_2$ (see section \ref{sectiond4}). We compute obstructions to the existence of the equivariant structure on line bundles $\L$ and $\M$ and prove that they are inverse to each other, so the bundle $\L \boxtimes \M$ on the product $C_1 \times C_2$ is equivariant. The next lemma is elementary.

\begin{lemma} \label{hyperell}
Let $C$ be a hyperelliptic curve of genus $g$. The twofold covering $C \to \P^1$ has $2g+2$ ramification points. Let us label them arbitrarily as $x_1, \dots, x_{g+1}$, $y_1, \dots, y_{g+1}$. Then there is a rational function $f$ on $C$ which has a simple zero in each $x_i$, simple pole in each $y_i$ and for such $f$ we have $\sigma_{\ast}f = -f$ where $\sigma$ is the nontrivial automorphism of the covering $C \to \P^1$.
\end{lemma}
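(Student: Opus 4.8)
The plan is to prove existence by writing down an explicit hyperelliptic model and then to read off the transformation law under $\sigma$ directly from that model. Since all $2g+2$ branch values of $\pi \colon C \to \P^1$ are finite after a suitable choice of affine coordinate $t$ on $\P^1$ (there are only finitely many branch values, so a generic $t$ places none of them over $\infty$), I can present $C$ by an equation $s^2 = \prod_{i=1}^{g+1}(t-a_i)\prod_{j=1}^{g+1}(t-b_j)$, where $a_i = \pi(x_i)$ and $b_j = \pi(y_j)$ under the given arbitrary labeling; the nontrivial automorphism is $\sigma \colon (s,t) \mapsto (-s,t)$. Because $2g+2$ is even, $\pi$ is unramified over $\infty$, so there are two points $\infty_{+}, \infty_{-}$ of $C$ above it, interchanged by $\sigma$.

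First I would compute the divisor of the coordinate function $s$. At each ramification point the covering has index $2$, so $t-a_i$ (respectively $t-b_j$) vanishes to order $2$ there while $s$, being a local uniformizer, vanishes to order $1$; near $\infty_{\pm}$ the function $s$ behaves like $t^{g+1}$. This gives $\mathrm{div}(s) = \sum_i x_i + \sum_j y_j - (g+1)(\infty_{+}+\infty_{-})$. Similarly the pullback $h = \prod_{j=1}^{g+1}(t-b_j)$ of a polynomial in $t$ has $\mathrm{div}(h) = 2\sum_j y_j - (g+1)(\infty_{+}+\infty_{-})$, the double zeros again coming from the ramification of $\pi$ over each $b_j$. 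I would then set $f = s/h$; subtracting the two divisor computations gives $\mathrm{div}(f) = \sum_i x_i - \sum_j y_j$, so $f$ has a simple zero at each $x_i$, a simple pole at each $y_j$, and no other zeros or poles, which establishes existence. The transformation law is now immediate: $\sigma$ fixes $t$, hence fixes $h$, while sending $s$ to $-s$, so $\sigma_{\ast} f = -f$.

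Finally, to see that the sign is forced for every such $f$ and not merely the one exhibited, I would note that any two rational functions on the curve $C$ with the same divisor differ by a nonzero constant, and scaling $f$ by a constant leaves the relation $\sigma_{\ast} f = -f$ unchanged. Alternatively one can argue intrinsically: the divisor $\sum_i x_i - \sum_j y_j$ is $\sigma$-invariant, so $\sigma_{\ast} f / f$ is a constant square root of unity, and it cannot equal $+1$, since then $f$ would descend to a function on $\P^1$ and thereby acquire even order at each ramification point, contradicting the simple zeros and poles.

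As the paper itself observes, this lemma is entirely elementary, so I do not expect a genuine obstacle. The only points requiring care are the local multiplicity bookkeeping at the ramification points and at $\infty_{\pm}$, the choice of coordinate guaranteeing that no branch point lies over $\infty$, and keeping the convention straight so that $\sigma$ acts on $s$ by negation (whence $\sigma_{\ast} f = -f$ rather than $+f$).
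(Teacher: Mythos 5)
Your proof is correct and complete. The paper itself offers no argument here --- it simply declares the lemma elementary --- so there is no authorial proof to compare against; your explicit affine model $s^2=\prod_i(t-a_i)\prod_j(t-b_j)$ with $f=s/\prod_j(t-b_j)$ is the standard verification, the divisor bookkeeping at the ramification points and at $\infty_{\pm}$ is right, and your intrinsic argument that $\sigma_{\ast}f/f$ is a constant square root of unity which cannot be $+1$ (else $f$ would descend to $\P^1$ and have even order at the branch points) correctly establishes the sign for every $f$ with the prescribed divisor.
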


The group $G$ has a presentation 
$$
\langle x,y,z \mid x^4 = y^2 = z^2 = [x,z] = [y,z] = 1, x^y = x^{-1} \rangle,
$$
where $x^y = y^{-1}xy$, $[x,y] = xyx^{-1}y^{-1}$. We say that an element $g \in G$ is written in standard form if $g = x^k y^l z^m$, where $0 \le k \le 3$, $0 \le l,m \le 1$. Each element of $G$ has the unique standard form.

Now we compute the obstruction of $\L$.

\begin{lemma}
The line bundle $\L$ on $C_1$ has obstruction $$ \eta_{\L}(x^ky^lz^m, x^{k'}y^{l'}z^{m'}) = (-1)^{m(k' + l')} \cdot i^{-kl'} $$ for elements of $G$ in the standard form (here $i = \sqrt{-1}$).
\end{lemma}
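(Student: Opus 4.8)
The plan is to compute the cocycle $\eta_{\L}$ directly from its definition \eqref{obstr} by choosing, for every $g \in G$, an explicit rational function $f_g$ realizing an isomorphism $\phi_g \colon g_* \L \to \L$. Since the class of $L = B_1 - B_2$ is $G$-invariant, the divisor $g_* L - L$ is principal, so I can take $f_g$ with $\mathrm{div}(f_g) = g_* L - L$; then $\phi_g$ is multiplication by $f_g$ and \eqref{obstr} becomes the constant
$$
\eta_{\L}(g,h) = f_g \cdot (g_* f_h) \cdot f_{gh}^{-1}.
$$
It suffices to fix the $f_g$ on generators together with a rule writing $f_{x^k y^l z^m}$ as a fixed word in them, and then evaluate on standard-form pairs. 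The essential simplification is that $L$ splits as $B_1 - B_2$ with independent contributions, so the cocycle factors as $\eta_{\L} = \eta^{B_1} \cdot \eta^{B_2}$; I expect the power of $i$ to come from $B_2$ and the sign from $B_1$.

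For the $B_2$-part, recall that $B_2$ is a fibre of the hyperelliptic map $\pi \colon C_1 \to C_1/\la z \ra \cong \P^1$, so $\O(-B_2) = \pi^* \O_{\P^1}(-1)$ and each $f_g^{B_2}$ is pulled back from $\P^1$. Because $z$ acts trivially on the base, this contribution depends only on the image of $g$ in $D_4 = G/\la z \ra$, which acts faithfully on $\P^1$ with $x$ a rotation of order $4$ and $y$ a reflection. The $B_2$-obstruction is then the obstruction to linearizing $\O_{\P^1}(-1)$ for this $D_4$-action: lifting $x$ and $y$ to $GL_2$, the relation $x^y = x^{-1}$ holds only up to the scalar $i$, and propagating this through the word $x^k y^l$ yields the factor $i^{-kl'}$. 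This factor is independent of the $z$-exponents $m, m'$, exactly as the statement requires.

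For the $B_1$-part I will use Lemma \ref{hyperell}. The ramification locus of $\pi$ is precisely $E_1$, and $B_1, B_1'$ (with $E_1 = B_1 + B_1'$) are its two $\la x^2, xy \ra$-orbits; labelling the $2g+2 = 8$ branch points so that $B_1$ are the zeros and $B_1'$ the poles, Lemma \ref{hyperell} supplies a function $f$ with $\mathrm{div}(f) = B_1 - B_1'$ and $z_* f = -f$. A short coset computation shows that $g = x^k y^l z^m$ interchanges $B_1$ and $B_1'$ exactly when $k + l$ is odd, so $f_g^{B_1}$ is $f^{\pm 1}$ or a constant according to this parity. Feeding this into the cocycle formula, each factor of $z$ in the first argument applies $z_*$ to the copy of $f$ sitting inside $f_h^{B_1}$, which is present precisely when $h$ swaps the orbits; by $z_* f = -f$ this produces the sign $(-1)^{m(k'+l')}$.

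The main obstacle is the scalar bookkeeping. The functions $f$ and the base functions $u_g$ are determined only up to constants, so I must normalize them — fixing the residual scalars appearing in $x_* f$ and $y_* f$ and the explicit $GL_2$-lift on $\P^1$ — so that the product $\eta^{B_1} \cdot \eta^{B_2}$ lands on the stated cocycle on the nose, rather than on a merely cohomologous representative. This is where the conventions for $g_*$ versus $g^*$ and the precise choice of lift must be pinned down, and where Lemma \ref{hyperell} is indispensable, since it is what rigidifies the sign governing the $B_1$-factor.
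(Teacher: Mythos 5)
Your strategy coincides with the paper's: split $L = B_1 - B_2$, compute a cocycle for each summand from explicit rational functions realizing the isomorphisms $\phi_g$, and use Lemma \ref{hyperell} to pin down the signs. For $B_1$ you apply Lemma \ref{hyperell} directly to the hyperelliptic cover $C_1 \to C_1/\la z \ra$ with its $8$ branch points split into the two orbits $B_1$ and $xB_1$, whereas the paper pulls a function back from $C_1/\la x^2, xy\ra$ and applies Lemma \ref{hyperell} to the genus-zero double cover $C_1/\la x^2,xy\ra \to C_1/\la x^2,xy,z\ra$; both routes produce a function with divisor $\pm(B_1 - xB_1)$ that is anti-invariant under $z$, and your parity analysis of the stabilizer $\la x^2,xy,z\ra$ recovers $(-1)^{m(k'+l')}$ exactly as in the paper. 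For $B_2$ your reformulation --- $\O(B_2)\cong\pi^*\O_{\P^1}(1)$, the cocycle is inflated from the faithful $D_4$-action on $C_1/\la z\ra\cong\P^1$ and computed from $GL_2$-lifts satisfying $\tilde x^{\tilde y}=i\,\tilde x^{-1}$ --- is a clean repackaging of what the paper does by hand with the functions $f, x_*f, x^2_*f, x^3_*f$ and the normalization $C=i$ forced by Lemma \ref{hyperell} applied to $C_1/\la z\ra \to C_1/\la y,z\ra$. In substance the two computations are the same; what remains in your version, as you note, is the scalar bookkeeping (e.g.\ checking that $x_*f$ and $y_*f$ can be simultaneously normalized for the $B_1$-function), which the paper also treats lightly.

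One point needs attention. Carrying out your $GL_2$ computation (or the paper's function-theoretic one) gives for the $B_2$-part the cocycle $i^{lk'}$: the scalar $i$ appears once each time the reflection $\tilde y^{l}$ from the first factor is moved past a power $\tilde x^{k'}$ in the second, so the exponent couples the $y$-exponent of the \emph{first} argument with the $x$-exponent of the \emph{second}. The total obstruction is therefore $(-1)^{m(k'+l')}\cdot i^{-lk'}$, not $i^{-kl'}$ as you assert (matching the lemma as printed). The formula with $i^{-lk'}$ is the one actually used in the following lemma of the appendix when the obstructions of $\L$ and $\M$ are summed, so the exponent $-kl'$ in the statement is evidently a transposition slip; your sketch, if completed honestly, would land on $i^{-lk'}$ rather than on the printed formula, and you should not force the computation to match the latter.
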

\begin{proof}
Recall from section \ref{sectiond4} that $\L = \O(B_1 - B_2)$, where $B_1$ is one of $\la x^2, xy \ra$-orbits in $E_1$ and $B_2$ is any free $\la z \ra$-orbit. We will compute obstructions for $B_1$ and $B_2$.

Consider the covering $q: C_1 \to C_1' = C_1 / \la x^2, xy \ra$. The curve $C_1'$ has genus 0. The image of $E_1$ on $C_1'$ is two points $P, xP$. Let $B_1 = q^\ast P$. The divisor $B_1$ has degree 4. The $G$-orbit of divisor $B_1$ consists of 2 points: $B_1$ and $xB_1 = q^\ast(xP)$. Let $f'$ be a rational function on $C_1'$ with divisor $xP - P$ and $f = q^\ast f'$ be its pullback to $C_1$. The quotient map $C_1' \to C_1' / \la z \ra$ is ramified in 2 points $P, xP$. By Lemma \ref{hyperell} we have $z_\ast f' = -f'$. The function $f$ is $\la x^2, xy \ra$-invariant since it comes from $C_1'$. The function $f$ has divisor $xB_1 - B_1$ and $x_\ast f$ has inverse divisor $B_1 - xB_1$. Then for the right choice of a constant we have $x_\ast f = \frac 1f$, $y_\ast = \frac 1f$, $z_\ast = -f$. The divisor $B_1$ is invariant under the action of $\la x^2, xy, z \ra$ and we can put $\phi_g = 1$ if $g \in \la x^2, xy, z \ra$ and $\phi_g = f$ if $g \notin \la x^2, xy, z \ra$. Then we have 
$$ \eta_{B_1}(x^ky^lz^m, x^{k'}y^{l'}z^{m'}) = (-1)^{m(k' + l')}.$$
Consider the covering $q: C_1 \to C_1' = C_1 / \la z \ra$. The curve $C_1'$ has genus 0. The image of $E_2$ on $C_1'$ consists of 4 points, two of them have stabilizer $\la y \ra$, other two have $\langle x^2y \rangle$. Let $P$ be one of the points which is stabilized by $y$ and let $B_2 = q^\ast P$. It is a divisor of degree 2. There is a function $f'$ on $C_1'$ with divisor $xP - P$ and its pullback $f = q^\ast f'$ has divisor $xB_2 - B_2$. The functions $x^k_\ast f$ have divisors $x^{k+1}B_2 - x^kB_2$. The function $f \cdot x_\ast f \cdot \dotsc \cdot x^{k-1}_\ast f$ has divisor $x^k B_2 - B_2$. The function $f \cdot x_\ast f \cdot x^2_\ast f \cdot x^3_\ast f$ has trivial divisor since $x^4 = 1$. Thus multiplying $f$ by a constant we can assume that
\begin{equation}\label{fun}
f \cdot x_\ast f \cdot x^2_\ast f \cdot x^3_\ast f = 1.
\end{equation}
The divisor $B_2$ has an orbit of order 4 under the action of $G$, it consists of $B_2, \dots, x^3B_2$. We have $yB_2 = zB_2 = B_2$. Then for the divisor $B_2$ we can put $\phi_{x^ky^lz^m} = \phi_{x^k}$, where $\phi_{x^k} = f \cdot x_\ast f \cdot \dotsc \cdot x^{k-1}_\ast f$ (by \eqref{fun} it depends only on $k \mod 4$). The divisor of the function $f' \cdot x_\ast f'$ is $x^2P - P$, but $P$ and $x^2P$ are the only points ramification points of the morphism $C_1' \to C_1' / \la y \ra$, therefore by lemma \ref{hyperell} we have $y_\ast(f' \cdot x_\ast f') = - f' \cdot x_\ast f'$. The divisor of $y_\ast f$ is $y(xB_2 - B_2) = x^3B_2 - B_2$ and $x^{-1}_\ast f$ has divisor $B_2 - x^3B_2$, so we have 
$$
y_\ast f = \frac C {x^{-1}_\ast f}
$$
for some constant $C$.
And
$$
y_\ast(f \cdot x_\ast f) = y_\ast f \cdot x^{-1}_\ast y_\ast f = \frac C {x^2_\ast f} \cdot \frac C {x^3_\ast f} = C^2 f \cdot x_\ast f.
$$
Thus $C^2 = -1$. Changing $f$ by $if$ if needed we can assume that $C = i$. Note that such a change preserves \eqref{fun}. Now a simple calculation gives the cocycle for $B_2$: 
$$ \eta_{B_2}(x^ky^lz^m, x^{k'}y^{l'}z^{m'}) = i^{lk'}.$$
It remains to add two obstructions to finish the proof of the Lemma.
\end{proof}

Then we construct $\M$ which gives us an explicit construction of exceptional sequences.

\begin{lemma}
There is a $G$-invariant line bundle $\M$ of degree zero on $C_2$ with obstruction inverse to $\eta_L$.
\end{lemma}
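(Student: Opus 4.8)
The plan is to mirror the computation of $\eta_\L$ on $C_1$: I will exhibit $\M$ as $\O(D)$ for an explicit degree-zero divisor $D$ on $C_2$ built from the ramification fibers $E_i$ and their images under the genus-$0$ and genus-$1$ quotient maps, and then compute $\eta_\M$ directly from the defining formula $\eta_\M(g,h) = \phi_g \cdot g_\ast \phi_h \cdot \phi_{gh}^{-1}$. Since we only need $\L \boxtimes \M$ to be equivariant, it suffices to arrange that $\eta_\M$ equals $\eta_\L^{-1}$ in $H^2(G,\C^\times)$, that is (up to a coboundary)
\[
\eta_\M(x^ky^lz^m, x^{k'}y^{l'}z^{m'}) = (-1)^{m(k'+l')}\cdot i^{kl'}.
\]

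First I would split this target cocycle into the two factors that appeared for $\L$: the sign factor $(-1)^{m(k'+l')}$ and the fourth-root factor $i^{kl'}$. Each should be produced by one building-block divisor on $C_2$, exactly as $B_1$ produced the sign and $B_2$ produced the power of $i$ on $C_1$. Concretely, I would take a point $P$ on a suitable genus-$0$ quotient of $C_2$ whose stabilizer data forces an orbit of size $2$, pull it back to $C_2$, and use the quotient by $\la z \ra$ together with Lemma \ref{hyperell} to obtain $z_\ast f = -f$; this reproduces the sign factor. For the $i^{kl'}$ factor I would use a divisor whose $\la x \ra$-orbit has length $4$, normalize the realizing function by $f\cdot x_\ast f\cdot x^2_\ast f\cdot x^3_\ast f = 1$, and then determine the constant $C$ in the relation $y_\ast f = C/x^{-1}_\ast f$ by squaring, exactly as in the $\eta_\L$ computation where $C^2=-1$ forced $C = \pm i$.

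Second, I would assemble the two contributions and compare with $\eta_\L^{-1}$. Because only agreement in $H^2(G,\C^\times)$ matters, I would allow myself to adjust the realizing functions by constants and to replace $\eta_\M$ by a cohomologous cocycle; this is also where the harmless difference between the forms $i^{-kl'}$ and $i^{-lk'}$ already seen for $\L$ is absorbed. The genus-$1$ (bielliptic) quotient $C_2/\la x^2z\ra$, on which $y$ acts, is the natural place to run the analogue of the $C_1$ argument, and Lemma \ref{beauvillelemma} lets me keep track of how the relevant functions descend through the tower of quotients.

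The main obstacle is pinning down the precise fourth-root-of-unity constants in the transformation rules $g_\ast f = (\text{const})\cdot f^{\pm 1}$ for the generating functions, since these constants are exactly what encode the cohomology class. The orbit and sign combinatorics are routine, but showing that the resulting constant is genuinely the inverse one (equivalently, that $\eta_\M = \eta_\L^{-1}$ rather than some other nontrivial class) requires careful bookkeeping through the genus-$1$ quotient, which is subtler than the purely hyperelliptic situation on $C_1$. Once the constant is fixed correctly, multiplying the two building-block cocycles and reducing modulo coboundaries yields $\eta_\M = \eta_\L^{-1}$, so that $\eta_\L\cdot\eta_\M = 0$ in $H^2(G,\C^\times)$ and $\L\boxtimes\M$ is $G$-equivariant as required.
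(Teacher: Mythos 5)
Your plan is essentially the paper's proof: the authors likewise build $\M$ from two degree-zero divisors $A_1$, $A_2$ pulled back from intermediate quotients of $C_2$, compute $\eta_{A_1}=(-1)^{mk'}$ and $\eta_{A_2}=i^{l(k'+2m')}$ via Lemma \ref{hyperell} and the same normalizations ($f\cdot x_\ast f\cdot x^2_\ast f\cdot x^3_\ast f=1$, then $C^2=-1$ forcing $C=i$), and then verify only that $\eta_\L\cdot\eta_\M$ is the explicit coboundary $(-1)^{ml'+lm'}$ rather than that $\eta_\M$ equals $\eta_\L^{-1}$ on the nose --- exactly the flexibility you allow yourself. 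One detail to correct when executing: $z$ acts freely on $C_2$ (it is not conjugate to any of the local monodromies $y, x^3yz, x^2y, x^2z$), so the sign factor cannot be extracted from $C_2/\la z\ra$; the paper instead uses the genus-one quotient $C_2/\la y,x^2y\ra$ with $P,Q$ taken in the images of $E_5,E_6$ for the sign, and $C_2/\la x^2z\ra$ with $P,Q$ from $E_1,E_3$ for the power of $i$.
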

\begin{proof}
We will construct divisors $A_1, A_2$ on $C_2$ with $G$-invariant classes in the Picard group with obstructions $\eta_{A_1} = (-1)^{mk'}$ and $\eta_{A_2} = i^{l(k' + 2m')}$. Consider the covering $q: C_2 \to C_2' = C_2 / H$, where $H = \la y, x^2y \ra$. In each of the fibers $E_1$, $E_3$ there are 4 points with stabilizer $\la y \ra$ and 4 points with stabilizer $\la x^2y \ra$. Thus the curve $C_2'$ has genus 1. The subgroup $H$ is normal in $G$ and there is an action of $G / H$ on $C_2'$. Each of $E_5$, $E_6$ consists of two free $H$-orbits and is mapped to two points on $C_2'$. Let us denote by $P$ a point in the image of $E_5$ and by $Q$ a point in the image of $E_6$. The image of $E_5$ on $C_2'$ is $\{P, xP\}$ because $P$ is stabilized by $H$ and $x^2z$, so $xP$ is a different point in the image of $E_5$. The image of $E_6$ is $\{Q, xQ\}$. Let the divisor $A_1$ on $C_2$ be equal to $q^{\ast}(P-Q)$. It has degree 0. 

We claim that the line bundle $\O(A_1)$ lies in the $Pic(C_2)^G$. We need to find isomorphisms $g_\ast \O(A_1) \to \O(A_1)$ for each $g \in G$, or in other words, rational functions $\phi_g$ which have divisors $gA_1 - A_1$. The divisor $A_1$ is invariant under the action of the subgroup $\la x^2, y, z \ra = \la H, x^2z \ra$. There are only two elements in the $G$-orbit of $A_1$: $A_1$ and $xA_1$. So we only need to find a function on $C_2$ with divisor $xA_1 - A_1$. Consider the covering $C_2' \to C_2'' = C_2 / \la x, x^2y, x^2z \ra$. Then $C_2''$ has genus 0 and $C_2' \to C_2''$ is the covering of degree two ramified in the points $P,xP,Q,xQ$.

By Lemma \ref{hyperell} there is a function $f'$ on $C_2'$ with divisor $P-xP-Q+xQ$. We denote its pullback to $C_2$ by $f$. The divisor of $f$ on $C_2$ is $xA_1 - A_1$. With the right choice of the  multiplicative constant $G$ acts on $f$ in the following way: $x_\ast f = \frac 1f$, $y_\ast f = f$, $z_\ast f = -f$. Now we can put $\phi_g = 1$ if $g \in \la x^2, y, z \ra$ and $\phi_g = f$ if $g \notin \la x^2, y, z \ra$. We see that $\O(A_1)$ lies in the invariant part of the Picard group. Computing the obstruction by the formula \eqref{obstr} we see that it is equal to 
$$ \eta_{A_1}(x^ky^lz^m, x^{k'}y^{l'}z^{m'}) = (-1)^{mk'}. $$

Now we will construct another divisor with $G$-invariant class on $C_2$. Since we will not need any more the details of the construction of $A_1$, we will reuse the same notation $P, Q, C_1'$ etc for new objects. Consider the covering $q: C_2 \to C_2' = C_2 / \langle x^2z \rangle$. The group $G / \langle x^2z \rangle$ acts on $C_2'$. Each of $E_1, E_3$ is mapped to 4 points on $C_2'$. In both cases two of the points are stabilized by $y$ on $C_2'$ and other two by $x^2y$. Let $P$ be one of the points in the image of $E_1$ stabilized by $y$, and $Q$ be such a point in the image of $E_3$. Then the 4 points in the image of $E_1$ are $P,xP,x^2P,x^3P$, the points $P$ and $x^2P$ are stabilized by $y$ and $xP,x^3P$ are exchanged by $y$ and stabilized by $x^2y$. The same is true about $E_3$ and points $x^iQ$. Let $A_2'$ be the divisor $P-Q$ on $C_2'$ and $A_2 = q^\ast A_2'$. Both divisors have degree 0. 
As in the case of $B_2$, the function $f' \cdot x_\ast f'$ on $C_2'$ has simple zeros or poles in all the points stabilized by $y$ and only in them, therefore $y_\ast (f \cdot x_\ast f) = - f \cdot x_\ast f$. Analogously we have 
$$y_\ast f = \frac C {x^{-1}_\ast f}$$
and we can assume $C = i$. The only difference is that now $\phi_{x^ky^lz^m} = \phi_{x^{k\pm 2m}}$, because we took quotient by $\langle x^2z \rangle$, not by $\langle z \rangle$. We get 
$$ \eta_{B_2}(x^ky^lz^m, x^{k'}y^{l'}z^{m'}) = i^{l(k' + 2m')}. $$
If we add the obstructions for line bundles $\L$ and $\M$ we will get
$$
\eta(x^ky^lz^m, x^{k'}y^{l'}z^{m'}) = (-1)^{mk'} \cdot i^{l(k'+2m')} \cdot (-1)^{mk' + ml'} \cdot i^{-lk'} = (-1)^{ml' + lm'}
$$
and this cocycle is cohomologous to zero because it is the differential of the 1-cochain $\beta:G \to \C^\times$ such that $\beta(x^ky^lz^m) = -1$ if both $k$ and $l$ are odd and $\beta = 1$ otherwise. 
\end{proof}

\end{document}